\newtheorem{theorem}{Theorem}
\newtheorem{lemma}{Lemma}
\newtheorem{example}{Example}
\theoremstyle{definition}\newtheorem{remark}{Remark}
\newcounter{cst}
\title{Boundary control of heat-heat cascades}
\author{Hugo Lhachemi, Christophe Prieur, and Emmanuel Tr{\'e}lat
\thanks{Hugo Lhachemi is with Universit{\'e} Paris-Saclay, CNRS, CentraleSup{\'e}lec, Laboratoire des signaux et syst\`emes, 91190, Gif-sur-Yvette, France (email: hugo.lhachemi@centralesupelec.fr).}
\thanks{Christophe Prieur is with Universit\'e Grenoble Alpes, CNRS, Gipsa-lab, 38000 Grenoble, France (e-mail: christophe.prieur@gipsa-lab.fr).}
\thanks{Emmanuel Trélat is with Sorbonne Universit\'e, Universit\'e Paris Cit\'e, CNRS, Inria, Laboratoire Jacques-Louis Lions, LJLL, F-75005 Paris, France (e-mail: emmanuel.trelat@sorbonne-universite.fr).}}
\date{}
\begin{document}

\maketitle

\begin{abstract}                        
This paper addresses the problem of feedback stabilization of a cascade of two heat equations that are coupled in the boundary conditions, the input being a boundary control for the first component of the cascade. Two distinct control input settings are studied: one being collocated with the coupling condition of the two heat equations, and the other being non-collocated. These two different configurations induce different controllability properties.
\\
The key idea developed in this paper is to carry out spectral reductions, not for each of the two components of the cascade separately, but instead, directly for the PDE cascade viewed as one single system. A detailed study of the eigenelements of the PDE cascade yields a complete characterization of the spectral mode controllability and allows us to derive an explicit state-feedback control strategy for the exponential stabilization of the plant. This approach is extended to a systematic output-feedback control strategy either with a distributed output operator or with a pointwise measurement done on the second heat equation of the PDE cascade. In both state-feedback and output-feedback scenarios, stabilization results are established in $L^2$ and $H^1$ norms. Finally, we show how the results developed in this paper for the two studied heat-heat cascades extend to their dual problems.
\end{abstract}

\section{Introduction}\label{sec: Introduction}

The design of control strategies for partial differential equations (PDEs) cascades remains challenging. Backstepping design methods have been proposed in \cite{chen2017backstepping,ghousein2020backstepping} for coupled hyperbolic-parabolic PDE systems. Control design for a heat-wave PDE with coupling at the boundary has been studied in \cite{zhang2004polynomial}. Boundary null controllability of two coupled parabolic PDEs has been studied in \cite{bhandari2021boundary}. Hyperbolic-elliptic couplings have been reported in \cite{chowdhury2023boundary,rosier2013unique}. The output-feedback stabilization of a wave-heat cascade is reported in~\cite{lhachemi2025controllability} by means of spectral reduction methods.
\\[2mm]
This paper focuses on the topic of stabilization of heat-heat cascades. While approximate and null controllability properties of such systems have been studied for a number of heat-heat cascade configurations in the literature  (see \cite{ammar2011recent,benabdallah2020block,boyer2022controllability,khodja2016new}), it seems that only few references address the actual design of feedback control strategies. The state-feedback stabilization in $H^{-1} \times H^1$-norm of a heat-heat cascade whose first component is open-loop stable with null reaction coefficient is addressed in~\cite{wang2015stabilization}. The case of a cascade of two heat equations with identical reaction coefficients is studied in \cite{kang2016stabilisation}. Assuming that two Dirichlet measurements are available on each of the two components of the PDE cascade, the authors design an output-feedback control strategy using backstepping transformations that stabilize the plant in $L^2$-norm. 
In \cite{tang2024boundary,tang2025sampled}, the authors consider the following cascade of two heat equations with distinct reaction coefficients:
\begin{subequations}\label{eq: system}
	\begin{align}
		& \partial_t y = \partial_{xx} y + a y , \label{eq: system - y} \\
		& \partial_t z = \partial_{xx} z + b z , \label{eq: system - z} \\
		& \partial_x y(t,0) = s z(t,0) , && y(t,1) = \partial_x z(t,1) = 0 , \\
		& \partial_x z(t,0) = u(t) , \\
		& y(0,x) = y_0(x) , && z(0,x) = z_0(x),
	\end{align}
\end{subequations}
for $t > 0$ and $x \in (0,1)$, where $a,b,s\in\mathbb{R}$ with $s\neq 0$, $u(t)$ is the control input at time $t$, and $y_0,z_0 \in L^2(0,1)$ are given initial conditions. In \cite{tang2024boundary,tang2025sampled}, a state-feedback control strategy is designed for \eqref{eq: system}, based on spectral reduction methods. Under an assumption on the reaction coefficients $a,b$, which appears to be required by the control method used for decoupling parts of the dynamics, but which is not related to the intrinsic controllability properties of the original plant \eqref{eq: system}, sufficient stability conditions are derived for stabilization in $L^2$-norm. 
\\[2mm]
In this paper, we first design a state-feedback control strategy for the exponential stabilization of \eqref{eq: system}. The adopted approach relies on spectral reduction methods~\cite{coron2004global,lhachemi2020pi,russell1978controllability}. However, contrarily to \cite{tang2025sampled} where the spectral reduction of \eqref{eq: system} is done on each individual component of the PDE cascade, yielding a cascade connection taking the form of a series, we perform the spectral reduction by considering the PDE cascade \eqref{eq: system} as one single system~\cite{lhachemi2025controllability}. Our analysis relies on a detailed study of the eigenelements of the PDE cascade, showing that the generalized eigenvectors of the underlying unbounded operator forms a Riesz basis. The benefit of our approach is that it allows a full characterization of the controllability properties of each individual mode of the PDE cascade, and in turn, of the exact controllability properties of the system in a suitable Hilbert space (see Appendix~\ref{appendix}), and the derivation of a systematic state-feedback control strategy for the exponential stabilization of the plant. We extend our approach to a systematic output-feedback control strategy either with a distributed output operator or with a pointwise measurement done on the second heat equation of the PDE cascade. In particular, we establish the following result that, at this stage, we state in an informal way (see Theorems~\ref{thm2} and~\ref{thm3} for precise and complete statements).

\begin{theorem}
Under (explicit, necessary and sufficient) controllability and observability conditions, 
the closed-loop controlled system \eqref{eq: system} can be exponentially stabilized at any decay rate in $\mathcal{H}^0 = L^2(0,1) \times L^2(0,1)$ and also in $\mathcal{H}^1 = H^1(0,1) \times H^1(0,1)$, with an explicit output-feedback control. 
\end{theorem}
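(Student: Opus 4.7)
The plan is to rewrite \eqref{eq: system} as an abstract evolution equation $\dot{X} = \mathcal{A} X + \mathcal{B} u$ on $\mathcal{H}^0 = L^2(0,1)^2$, with a suitable lifting (e.g.\ a Dirichlet or Neumann lift) to handle the non-homogeneous boundary datum $u(t)$ appearing in $\partial_x z(t,0)$, together with the coupling $\partial_x y(t,0) = s z(t,0)$ at the interface. First I would study carefully the underlying unbounded operator $\mathcal{A}$ obtained by setting $u=0$. Because the cascade is lower-triangular in $(y,z)$, its spectrum is the union of the spectra of the two scalar heat operators (shifted by $a$ and $b$), so $\mathcal{A}$ has a compact resolvent with real eigenvalues tending to $-\infty$. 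Generically the eigenvectors are obtained by propagating each $z$-mode into $y$ via the coupling; however, resonances occur exactly when an eigenvalue of the $y$-block coincides with one of the $z$-block, producing rank-$2$ Jordan blocks. I would treat these explicitly, compute the corresponding generalized eigenvectors, and then establish a Riesz basis property on $\mathcal{H}^0$ via a standard quadratically close comparison with the (orthonormal) eigenbasis of the decoupled operator, together with Bari's theorem.

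Once the Riesz basis is available, I would project the dynamics onto each (generalized) eigenspace to obtain a scalar (or $2\times 2$) ODE driven by $u$ through an explicit coefficient $b_n$ that one reads off from the boundary traces of the eigenvectors at $x=0$. A direct computation of those traces should yield a closed-form necessary and sufficient controllability condition of the form $b_n \neq 0$ for every $n$, which in particular rules out algebraic relations between $a$, $b$ and the eigenvalues. Stability of the high-frequency part is automatic from the spectrum tending to $-\infty$, so for any prescribed decay rate $\alpha > 0$ only finitely many modes need to be stabilized. On this finite-dimensional (unstable or insufficiently stable) subsystem, the controllability condition $b_n \neq 0$ translates via the Hautus test into Kalman-controllability, and a standard pole-shifting argument yields an explicit linear state feedback $u = K X$ that places the truncated closed-loop spectrum at will. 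Combining this with the trivial stability of the high-frequency part and the Riesz basis expansion gives exponential stabilization in $\mathcal{H}^0$.

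For the output-feedback version, I would design a Luenberger observer acting only on the same finite-dimensional unstable subspace. The observation operator $\mathcal{C}$ (either distributed or pointwise on $z$) would be projected on each generalized eigenvector; computing these projections explicitly gives a necessary and sufficient observability condition dual to the controllability one, and for the pointwise measurement it will impose an extra non-vanishing condition on $z$-eigenmodes at the measurement point. A separation-principle argument, again made rigorous via the Riesz basis and the fact that the observer error equation is finite-dimensional after projection (up to an exponentially small tail), then yields the closed-loop stabilization estimate in $\mathcal{H}^0$. To upgrade to $\mathcal{H}^1 = H^1(0,1)^2$, I would exploit parabolic smoothing: the closed-loop semigroup maps $\mathcal{H}^0$ into $D(\mathcal{A})\subset \mathcal{H}^1$ instantaneously with controlled norm, so the exponential decay in $\mathcal{H}^0$ propagates to $\mathcal{H}^1$ for $t$ bounded away from $0$, and one handles $t$ near $0$ by assuming $H^1$ initial data and checking compatibility of the boundary conditions carried by the feedback.

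The main obstacle I anticipate is the detailed spectral analysis in the resonant case: proving that the rank-$2$ Jordan chains are correctly identified, computing the corresponding boundary traces that govern controllability, and ensuring that the Riesz basis property still holds uniformly when generalized eigenvectors are present. A secondary delicate point is the rigorous justification of the abstract framework with the unbounded input operator $\mathcal{B}$ (coming from the Neumann boundary control) and its compatibility with the $H^1$ analysis, which typically requires the admissibility of $\mathcal{B}$ and a careful interpretation of mild versus strong solutions of the closed-loop system.
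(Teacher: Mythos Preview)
Your overall strategy coincides with the paper's: Riesz basis via Bari's theorem (with explicit treatment of the rank-$2$ Jordan chains at resonances $\lambda_{1,n}=\lambda_{2,m}$), projection onto modes, Hautus test on a finite-dimensional truncation, pole placement, and a finite-dimensional observer. Three points deserve comment. First, the paper does not handle the unbounded input via admissibility; instead it lifts the boundary datum ($\tilde z = z - \varphi u$), augments the state with $u$, and takes $v=\dot u$ as the new control, so the resulting abstract system has a bounded control operator --- this sidesteps the delicate issue you flag at the end. Second, and more substantially, the paper does \emph{not} obtain the $\mathcal{H}^1$ estimate by parabolic smoothing of the closed-loop semigroup; it proves a separate Riesz basis result in $\mathcal{H}^1$ (the rescaled family $\{(n+1/2)^{-1}\pi^{-1}\phi_{1,n}\}\cup\{(1+m^2\pi^2)^{-1/2}\phi_{2,m}\}$) and repeats the Lyapunov analysis with the corresponding weighted series. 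Your smoothing argument is plausible in spirit but would require regularizing estimates for the \emph{closed-loop} semigroup, which mixes the PDE with the finite-dimensional observer through a boundary feedback; this is not immediate, and the paper's direct route avoids the issue. Third, two small corrections: in this cascade the control acts on $z$ and the measurement is taken on $y$ (the downstream component), not on $z$; and the paper's observer follows the Sakawa architecture, estimating a larger block of modes (up to indices $N\geq N_0$, $M\geq M_0$ chosen sufficiently large) rather than only the unstable ones --- this enlargement is what makes the coupled Lyapunov inequality feasible, and a plain observer on the unstable subspace alone would leave the measurement residue uncontrolled.
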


It is worth mentioning that the approach developed in this paper also allows to handle the system 
\begin{subequations}\label{eq: system 2}
	\begin{align}
		& \partial_t y = \partial_{xx} y + a y , \\
		& \partial_t z = \partial_{xx} z + b z , \\
		& \partial_x y(t,0) = s z(t,0) , && y(t,1) = \partial_x z(t,0) = 0 , \\
		& \partial_x z(t,1) = u(t) , \\
		& y(0,x) = y_0(x) , && z(0,x) = z_0(x),
	\end{align}
\end{subequations}
for $t > 0$ and $x \in (0,1)$. Compared with \eqref{eq: system} in which the control input $u(t)$ and the coupling between the two heat equations are collocated at $x=0$ for the $z$-equation, in \eqref{eq: system 2} the control and the coupling are placed at different boundaries. The PDE cascade \eqref{eq: system 2} was studied in \cite{kang2016stabilisation} in the particular case $a=b$. As we shall find out (see in particular Remark~\ref{rem: system 2}), while all modes of \eqref{eq: system 2} are controllable, in the collocated case \eqref{eq: system} controllability is lost for some specific modes for particular choices of the reaction coefficients $a,b\in\mathbb{R}$. This is why, in this paper, we focus on the system \eqref{eq: system} and we only give a succinct presentation of our results for \eqref{eq: system 2} through some remarks.
\\[2mm]
The paper is organized as follows. First, we make a spectral analysis of the PDE cascades \eqref{eq: system} and \eqref{eq: system 2} in Section~\ref{sec: spectral analysis}, establishing in particular the Riesz basis property. Then, in Section~\ref{sec: spectral reduction}, we perform a spectral reduction of the problem and we establish spectral controllability properties. Our state-feedback control strategy is developed in Section~\ref{sec: state-feedback} and its extension to output-feedback is done in Section~\ref{sec: output-feedback}. The applicability of our results to the two dual PDE cascade problems is assessed in Section~\ref{sec: Dual problems}. Concluding remarks are formulated in Section~\ref{sec: conclusion}.

\section{Spectral analysis}\label{sec: spectral analysis}

We consider the Hilbert space 
\begin{subequations}\label{eq: space H0}
\begin{equation}
	\mathcal{H}^0 = L^2(0,1) \times L^2(0,1)
\end{equation}
endowed with the inner product
\begin{equation}
	\langle (f_1,g_1) , (f_2,g_2) \rangle = \int_0^1 (f_1 f_2 + g_1 g_2) \,\mathrm{d}x 
\end{equation}
\end{subequations}
and associated norm $\Vert \cdot \Vert_{\mathcal{H}_0}$.

\subsection{Computation of the spectrum and Riesz basis property}

In view of the studied systems \eqref{eq: system}-\eqref{eq: system 2}, we define the operator
\begin{equation*}
	\mathcal{A}(f,g) = (f''+af,g''+bg)
\end{equation*}
of domain
\begin{equation*}
	D(\mathcal{A}) = \big\{ (f,g) \in H^2(0,1) \times H^2(0,1) \,\mid\, 		
	f(1)=g'(0)=g'(1)=0 ,\, f'(0) = s g(0) \big\} . 
\end{equation*}
Let us describe the eigenelements of the operator $\mathcal{A}$.
We define the sets
	\begin{subequations}
	\begin{align}
		\Delta(a,b) & = \big\{ (n,m)\in\mathbb{N}^2 \,\mid\,  4(b-a) = (4 m^2 - (2n+1)^2 ) \pi^2 \big\} , \label{eq: def Delta(a,b)} \\
		\Delta_1(a,b) & = \left\{ n\in\mathbb{N} \,\mid\, \exists m\in\mathbb{N} \;\mathrm{s.t.}\; (n,m)\in\Delta(a,b) \right\}  , \nonumber\\
		\Delta_2(a,b) & = \left\{ m\in\mathbb{N} \,\mid\, \exists n\in\mathbb{N} \;\mathrm{s.t.}\; (n,m)\in\Delta(a,b) \right\} . \nonumber
	\end{align}
	\end{subequations}

\begin{lemma}\label{lem: eigenelements A}
The family $\Phi = \{ \phi_{1,n} \,\mid\, n\geq 0 \} \cup \{ \phi_{2,m} \,\mid\, m \geq 0\}$ forms a set of generalized eigenvectors of $\mathcal{A}$ associated with the eigenvalues $\Lambda = \{ \lambda_{1,n} \,\mid\, n\geq 0 \} \cup \{ \lambda_{2,m} \,\mid\, m \geq 0\}$, where
	\begin{subequations}\label{eq: point spectrum}
	\begin{align}
		\lambda_{1,n} & = a - (n+1/2)^2\pi^2 , && n \geq 0 , \\
		\lambda_{2,m} & = b - m^2\pi^2 , && m \geq 0 ,
	\end{align}
	\end{subequations}
	$\phi_{1,n} = (\phi_{1,n}^1,\phi_{1,n}^2)$ for every $n \geq 0$, with
	\begin{equation*}
		\phi_{1,n}^1(x) = \sqrt{2} \cos\left( (n+1/2) \pi x \right) , \quad \phi_{1,n}^2(x) = 0,
	\end{equation*}
	$\phi_{2,m} = (\phi_{2,m}^1,\phi_{2,m}^2)$ for every $m \geq 0$, with
	$\phi_{2,m}^1$ defined as follows:
	\begin{subequations}
	\begin{itemize}
		\item For $m \notin \Delta_2(a,b)$ with $b-a = m^2 \pi^2$:
		\begin{equation}\label{eq: phi_2_m^1 - 1}
		\phi_{2,m}^1(x) = s \mu_m (-1+x) .
		\end{equation}
		\item For $m \notin \Delta_2(a,b)$ with $b-a \neq m^2 \pi^2$:
		\begin{equation}\label{eq: phi_2_m^1 - 2}
		\phi_{2,m}^1(x) = -\frac{s \mu_m}{r_m \cosh(r_m)}\sinh(r_m(1-x))
		\end{equation}
		where $r_m$ is one of the two square roots of $\lambda_{2,m}-a=b-a-m^2\pi^2$. Note that this function is real-valued.
		\item For $m \in \Delta_2(a,b)$, denoting by $k \in \mathbb{N}$ the integer such that $(k,m)\in\Delta(a,b)$:
		\begin{equation}\label{eq: phi_2_m^1 - 3}
		\phi_{2,m}^1(x) =  \frac{s \mu_m}{(k+1/2)\pi} (1-x) \sin((k+1/2)\pi x) ,
		\end{equation}
	\end{itemize}
	\end{subequations}
	\begin{equation*}
		\phi_{2,m}^2(x) = 
		\left\{
		\begin{array}{ll}
			1 & \quad \textrm{if}\ \ m = 0 , \\
			\sqrt{2} \cos\left( m \pi x \right)  & \quad \textrm{if}\ \ m \geq 1 ,
		\end{array}
		\right.
	\end{equation*}
	where $\mu_m = \phi_{2,m}^2(0)$; we have $\mu_m = 1$ if $m=0$ and $\mu_m = \sqrt{2}$ if $m \geq 1$.
	Moreover: 
	\begin{enumerate}	
		\item $\lambda_{1,n}$ is an eigenvalue of $\mathcal{A}$ with associated eigenvector $\phi_{1,n}$:
		\begin{equation}\label{eq: A phi_1_n}
			\mathcal{A} \phi_{1,n} = \lambda_{1,n} \phi_{1,n}.
		\end{equation}			
		\item If $m\notin\Delta_2(a,b)$ then $\lambda_{2,m}$ is an eigenvalue of $\mathcal{A}$ with associated eigenvector $\phi_{2,m}$:
		\begin{equation}\label{eq: A phi_2_m}
			\mathcal{A} \phi_{2,m} = \lambda_{2,m} \phi_{2,m} .
		\end{equation} 
		\item $(n,m)\in\Delta(a,b)$ if and only if $\lambda = \lambda_{1,n} = \lambda_{2,m}$. In that case $\lambda$ is an eigenvalue of $\mathcal{A}$ of geometric multiplicity $1$ but of algebraic mutiplicity $2$, and
		\begin{subequations}\label{eq: phi generalized eigenvector}
		\begin{align}
			\mathcal{A} \phi_{2,m} & = \lambda \phi_{2,m} - \sqrt{2} s \mu_m \phi_{1,n} , \label{eq: phi generalized eigenvector - 1} \\
			\mathcal{A} \phi_{1,n} & = \lambda \phi_{1,n} . \label{eq: phi generalized eigenvector - 2}
		\end{align}		
		\end{subequations}	
		\item $\Phi$ contains the finite number $\mathrm{Card}(\Delta(a,b))$ of generalized eigenvectors that are not eigenvectors of $\mathcal{A}$.
	\end{enumerate}	 
\end{lemma}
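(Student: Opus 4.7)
The plan is to solve $(\mathcal{A}-\lambda I)(f,g)=(0,0)$ (or its generalized version) by exploiting the triangular structure of the boundary coupling. The second-component equation $g''+(b-\lambda)g=0$ with pure Neumann conditions $g'(0)=g'(1)=0$ is a standard self-adjoint eigenproblem, so either $g\equiv 0$ or $\lambda=\lambda_{2,m}=b-m^2\pi^2$ for some $m\geq 0$, with $g$ a scalar multiple of $\phi_{2,m}^2$. These two alternatives drive the whole analysis.

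In the branch $g\equiv 0$, the coupling condition $f'(0)=sg(0)$ collapses to $f'(0)=0$, so together with $f(1)=0$ and $f''+af=\lambda f$ one recovers the classical Neumann--Dirichlet eigenproblem on $(0,1)$, whose normalized eigenpairs are exactly $(\lambda_{1,n},\phi_{1,n}^1)$. This establishes item~1.

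In the branch $g\neq 0$, fix $m$, set $\lambda=\lambda_{2,m}$, and let $r_m^2=\lambda_{2,m}-a=b-a-m^2\pi^2$. The first component must satisfy $f''=r_m^2 f$ with $f(1)=0$ and $f'(0)=s\mu_m$. I would treat this by undetermined coefficients in two subcases. If $r_m=0$, i.e.\ $b-a=m^2\pi^2$, then $f$ is affine and the two conditions force \eqref{eq: phi_2_m^1 - 1}. Otherwise, writing $f=A\cosh(r_mx)+B\sinh(r_mx)$, the boundary conditions yield a $2\times 2$ linear system in $(A,B)$ whose determinant is proportional to $\cosh(r_m)$. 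A short calculation shows $\cosh(r_m)=0$ if and only if $r_m^2=-(n+1/2)^2\pi^2$ for some $n\geq 0$, which is precisely $(n,m)\in\Delta(a,b)$. Hence when $m\notin\Delta_2(a,b)$, the system is invertible, and combining the solution with the identity $\cosh(r_m)\sinh(r_mx)-\sinh(r_m)\cosh(r_mx)=-\sinh(r_m(1-x))$ yields \eqref{eq: phi_2_m^1 - 2}. This establishes item~2.

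The main obstacle is the resonant case $m\in\Delta_2(a,b)$, where $\cosh(r_m)=0$ and the inhomogeneous BVP for $f$ has no classical solution; the sole eigenvector at $\lambda=\lambda_{1,n}=\lambda_{2,m}$ is $\phi_{1,n}$, so the geometric multiplicity is~$1$. To promote the algebraic multiplicity to~$2$, I would look for $\phi_{2,m}$ with $(\mathcal{A}-\lambda I)\phi_{2,m}=c\,\phi_{1,n}$ for a scalar $c$ to be determined. The second component is $\phi_{2,m}^2$ as before, and setting $\alpha=(n+1/2)\pi$ the first-component equation becomes $f''+\alpha^2 f=c\sqrt 2\cos(\alpha x)$. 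The ansatz $f(x)=A(1-x)\sin(\alpha x)$ automatically satisfies $f(1)=0$; substitution into the ODE fixes $A$ in terms of $c$, and then $f'(0)=s\mu_m$ forces $c=-\sqrt 2\,s\mu_m$. This simultaneously produces \eqref{eq: phi_2_m^1 - 3} and the Jordan-chain relations \eqref{eq: phi generalized eigenvector}, and linear independence of $\phi_{2,m}$ from $\phi_{1,n}$ (its second component is nonzero) completes item~3. Finally, item~4 is immediate from the factorization $4m^2-(2n+1)^2=(2m-2n-1)(2m+2n+1)$: the set $\Delta(a,b)$ is empty unless $4(b-a)/\pi^2\in\mathbb{Z}\setminus\{0\}$, in which case both factors are bounded in absolute value by $|4(b-a)/\pi^2|$, leaving only finitely many pairs $(n,m)$, each contributing exactly one $\phi_{2,m}$ that is a generalized but not a genuine eigenvector.
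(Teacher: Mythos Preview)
Your proposal is correct and follows essentially the same approach as the paper: exploit the triangular structure to first solve the Neumann problem for $g$, then solve the resulting inhomogeneous boundary-value problem for $f$, and in the resonant case seek a rank-one Jordan chain above $\phi_{1,n}$. The only differences are cosmetic: you use the $\cosh/\sinh$ basis where the paper uses exponentials, you guess the ansatz $A(1-x)\sin(\alpha x)$ directly where the paper arrives at it via variation of parameters (and then sets its free constant $c_m=0$), and your finiteness argument uses the factorization $(2m-2n-1)(2m+2n+1)$ rather than the paper's lower bound $|m-n-1/2|(m+n+1/2)\geq \max(n,m)/2$.
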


\begin{proof}
	Solving $\mathcal{A}(f,g) = \lambda (f,g)$ for some $(f,g)\in D(\mathcal{A})$ and $\lambda\in\mathbb{C}$ gives:
	\begin{equation*}
		\left\{ \begin{array}{l} f''+af=\lambda f \\ f(1)=0 , \; f'(0) = s g(0) \end{array} \right.
		\;\mathrm{and}\;
		\left\{ \begin{array}{l} g''+bg=\lambda g \\ g'(0)=g'(1)=0 \end{array} \right.
	\end{equation*}
	with $f,g\in H^2(0,1)$. 
\\[2mm]
	If $g=0$ then $f''+a f = \lambda f$ and $f(1)=f'(0)=0$. Hence $\lambda=\lambda_{1,n}$ and $f = \phi_{1,n}^1$ for some arbitrary integer $n \geq 0$. Now if $g \neq 0$ we have $\lambda = \lambda_{2,m}$ and $g = \phi_{2,m}^2$ for some arbitrary integer $m \geq 0$. Hence, we have to solve $f''+af=\lambda_{2,m} f$ with $f(1)=0$ and $f'(0)=s\phi_{2,m}^2(0)=s\mu_m$. As a first case, let us assume that $\lambda_{2,m} = a$, i.e., $b-a = m^2 \pi^2$, yielding $f''=0$ with $f(1)=0$ and $f'(0)=s\mu_m$. A direct integration gives $f(x) = s \mu_m (-1+x)$. As a second case, let us assume that $\lambda_{2,m} \neq a$, i.e., $b-a \neq m^2 \pi^2$. Let $r_m$ be one of the two distinct square roots of $\lambda_{2,m}-a=b-a-m^2\pi^2$. Then $f(x) = \alpha e^{r_m x} + \beta e^{-r_m x}$ for some $\alpha,\beta\in\mathbb{C}$. The two conditions $f(1)=0$ and $f'(0)=s\mu_m$ imply
	\begin{equation}\label{eq: solve eigenvalues}
	\begin{bmatrix}
	e^{r_m} & e^{-r_m} \\ r_m & -r_m
	\end{bmatrix}
	\begin{bmatrix}
	\alpha \\ \beta
	\end{bmatrix}
	=
	\begin{bmatrix}
	0 \\ s\mu_m
	\end{bmatrix} .
	\end{equation}
	The latter square matrix is invertible if and only if its determinant $-2 r_m \cosh r_m \neq 0$. This holds if and only if $m \notin \Delta_2(a,b)$. In that case $\alpha = \frac{s\mu_me^{-r_m}}{2 r_m \cosh r_m }$ and $\beta = - \frac{s\mu_me^{r_m}}{2 r_m \cosh r_m }$, hence $f(x) = - \frac{s\mu_m}{r_m \cosh r_m} \sinh(r_m(1-x))$. 
\\[2mm]
	It remains to discuss the case where $m \in \Delta_2(a,b)$, that is: there exists an integer $k \geq 0$ such that $(k,m) \in \Delta(a,b)$, implying $\lambda = \lambda_{1,k} = \lambda_{2,m}$. In that case, it can be seen that there does not exist any solution of \eqref{eq: solve eigenvalues}. Hence, instead of searching an eigenvector, we search a generalized eigenvector for $\lambda$. Since $\phi_{1,k}$ is an eigenvector for $\lambda$, we search a solution $(f,g) \in D(\mathcal{A})$ of $\mathcal{A}(f,g) = \lambda (f,g) + \nu \phi_{1,k}$ for some $\nu\neq 0$ to be determined. With $g=\phi_{2,m}^2$, $f \in H^2(0,1)$ is solution of
	\begin{equation*}
		\left\{ \begin{array}{l} f''+af=\lambda f + \nu \phi_{1,k}^1 \\ f(1)=0 , \; f'(0) = s \mu_m \end{array} \right.
	\end{equation*}
	for some $\nu\neq 0$ to be determined. Noting that $\lambda-a=-(k+1/2)^2\pi^2$, the solution of $f''+af=\lambda f + \nu \phi_{1,k}^1$ takes the form
	\begin{equation*}
		f(x)  = \left( \gamma_1 + \frac{1}{2r_k} \int_0^x e^{-r_k s} \nu \phi_{1,k}^1(s) \,\mathrm{ds} \right) e^{r_k x} + \left( \gamma_2 - \frac{1}{2r_k} \int_0^x e^{r_k s} \nu \phi_{1,k}^1(s) \,\mathrm{ds} \right) e^{-r_k x}
	\end{equation*}		
	with $r_k = i(k+1/2)\pi$ for some $\gamma_1,\gamma_2,\nu\in\mathbb{R}$ to be determined so that $f(1)=0$ and $f'(0)=s\mu_m$. Evaluating the two latter identities shows that they hold if and only if $(\gamma_1-\gamma_2)r_k = s \mu_m = - \frac{\sqrt{2}}{2} \nu$. This is satisfied by setting $\nu = - \sqrt{2} s \mu_m$, $\gamma_1 = \gamma_2 + s\mu_m/r_k$, and $\gamma_2 = \frac{\sqrt{2}}{2} c_m$ for arbitrary $c_m \in\mathbb{R}$. Setting $\phi_{2,m}^1 = \mathrm{Re} f$, we infer that
	\begin{multline}
		 \phi_{2,m}^1(x) =  \frac{s \mu_m}{(k+1/2)\pi}\sin((k+1/2)\pi x)  - \frac{\sqrt{2}s\mu_m}{(k+1/2)\pi} \int_0^x \sin((k+1/2)\pi(x-s)) \phi_{1,k}^1(s) \,\mathrm{d}s \\ + c_{m} \sqrt{2} \cos((k+1/2)\pi x) \label{eq: eigenelements - constant cm}
	\end{multline}
	for an arbitrary $c_{m} \in \mathbb{R}$. A computation shows that $\int_0^x \sin((k+1/2)\pi(x-s)) \phi_{1,k}^1(s) \,\mathrm{d}s = \frac{\sqrt{2}}{2} x \sin((k+1/2)\pi x)$, yielding the claimed result by setting\footnote{The coefficient $c_m$ introduces a translation of the generalized eigenvector $\phi_{2,m}$ in the direction of the eigenvector $\phi_{1,k}$ associated to the same eigenvalue $\lambda$. Hence $c_m \in \mathbb{R}$ can be fixed arbitrarily. Note however that this choice fixes the corresponding constant that will appear in the computation of the dual Riesz basis provided by Lemma~\ref{eq: dual Riesz basis}.} $c_m = 0$.
\\[2mm]
	Let us finally prove that $\mathrm{Card}(\Delta(a,b)) < \infty$. If $(n,m) \in \Delta(a,b)$ then $\vert b - a \vert / \pi^2 = \vert m^2 - (n+1/2)^2 \vert = \vert m-n-1/2 \vert (m + n + 1/2) \geq (m + n + 1/2)/2 \geq \max(n,m)/2$, hence $\Delta(a,b) \subset \{ 0 , 1 , \ldots , \lfloor 2\vert b - a \vert / \pi^2 \rfloor \}^2$. Therefore $\Phi$ contains a finite number of generalized eigenvectors that are not eigenvectors of $\mathcal{A}$. \qed
\end{proof}

\begin{remark}
	For a given $a\in\mathbb{R}$, the set of $b \in \mathbb{R}$ such that $\Delta(a,b) \neq \emptyset$ is an infinite countable set consisting of isolated values. More precisely, if $b \neq b'$ are such that $\Delta(a,b) \neq \emptyset$ and $\Delta(a,b') \neq \emptyset$ then, based on \eqref{eq: def Delta(a,b)}, $\frac{4}{\pi^2}(b-a)$ and $\frac{4}{\pi^2}(b'-a)$ are two distinct integers, hence $\vert b - b' \vert \geq \pi^2/4$. The same conclusion holds when switching the roles of $a$ and $b$.
\end{remark}

\begin{example}
	Assume that $b-a = 35\pi^2/4$. Then $\Delta(a,b) = \{ (0,3),(8,9) \}$, hence $\Phi$ contains two generalized eigenvectors that are not eigenvectors of $\mathcal{A}$, namely, $\phi_{2,3}$ and $\phi_{2,9}$, associated with the eigenvalues $\lambda_{1,0} = \lambda_{2,3} = a - \pi^2/4$ and $\lambda_{1,8} = \lambda_{2,9} = a - 289\pi^2/4$ respectively. 
\end{example}

\begin{lemma}\label{lem: Riesz basis}
	The set $\Phi$ forms a Riesz basis of $\mathcal{H}^0$. Moreover, $\mathcal{A}$ generates a $C_0$-semigroup.
\end{lemma}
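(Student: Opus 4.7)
\textbf{Plan for the proof of Lemma~\ref{lem: Riesz basis}.}

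The strategy is to compare $\Phi$ with an explicit orthonormal basis of $\mathcal{H}^0$ via a finite-rank type perturbation and invoke a classical Bari-type argument.

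First I observe that $\{\phi_{1,n}^1\}_{n\geq 0}$ is the normalized cosine family $\{\sqrt{2}\cos((n+1/2)\pi x)\}$, the orthonormal basis of $L^2(0,1)$ of eigenfunctions of $-\partial_{xx}$ with boundary conditions $f(1)=f'(0)=0$, and $\{\phi_{2,m}^2\}_{m\geq 0}$ is the Neumann cosine orthonormal basis of $L^2(0,1)$. Consequently the family $E = \{\phi_{1,n}\}_{n\geq 0} \cup \{e_{2,m}\}_{m\geq 0}$, where $e_{2,m} := (0,\phi_{2,m}^2)$, is an orthonormal basis of $\mathcal{H}^0$. The family $\Phi$ is obtained from $E$ by replacing $e_{2,m}$ with $\phi_{2,m} = e_{2,m} + (\phi_{2,m}^1,0)$ and leaving the $\phi_{1,n}$ unchanged. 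Define the bounded linear operator $K$ on $\mathcal{H}^0$ by $K\phi_{1,n}=0$ and $Ke_{2,m}=(\phi_{2,m}^1,0)$. Then the map sending $E$ onto $\Phi$ is $T=I+K$.

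The key algebraic observation is that $K^2=0$: the range of $K$ lies in $L^2(0,1)\times\{0\}=\overline{\mathrm{span}}\{\phi_{1,n}\}_{n\geq 0}$, on which $K$ vanishes. Hence, as soon as $K$ is bounded, $T=I+K$ is a bounded bijection with bounded inverse $I-K$, which by definition means that $\Phi$ is a Riesz basis of $\mathcal{H}^0$.

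The main work is therefore to show that $K$ is bounded; I will in fact prove $\sum_{m\geq 0}\|\phi_{2,m}^1\|_{L^2}^2<\infty$, so that $K$ is Hilbert-Schmidt. The indices $m\in\Delta_2(a,b)$ and (if it exists) the single $m$ with $b-a=m^2\pi^2$ are finite in number by the last item of Lemma~\ref{lem: eigenelements A}, so the corresponding $\phi_{2,m}^1$ given by \eqref{eq: phi_2_m^1 - 1} and \eqref{eq: phi_2_m^1 - 3} contribute a finite sum. For the generic indices $m$ sufficiently large, $r_m^2=b-a-m^2\pi^2<0$ and I can take $r_m=i\beta_m$ with $\beta_m=\sqrt{m^2\pi^2-(b-a)}=m\pi-\frac{b-a}{2m\pi}+O(m^{-3})$. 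Then $\cosh(r_m)=\cos\beta_m=(-1)^m+O(m^{-2})$, so $|\cosh(r_m)|\geq 1/2$ for $m$ large, while $|\sinh(r_m(1-x))|=|\sin(\beta_m(1-x))|\leq 1$. The expression \eqref{eq: phi_2_m^1 - 2} then yields $\|\phi_{2,m}^1\|_{L^2}=O(1/m)$, which gives the required summability.

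For the semigroup claim, once $\Phi$ is a Riesz basis, $\mathcal{A}$ is a Riesz-spectral operator (modulo finitely many Jordan blocks of size two, coming from the pairs in $\Delta(a,b)$) whose eigenvalues $\lambda_{1,n}$, $\lambda_{2,m}$ are real and bounded above by $\max(a,b)$. In the decomposition along $\Phi$, $\mathcal{A}$ acts as a block-diagonal operator with uniformly bounded (in fact, at most $2\times 2$) real blocks whose real parts tend to $-\infty$, and is therefore the generator of a $C_0$-semigroup obtained as the direct sum of the finite-dimensional exponentials on each generalized eigenspace; this is standard once the Riesz basis property is established. The only delicate step I anticipate is the uniform asymptotic control of $1/\cosh(r_m)$ outlined above, which is what makes $K$ bounded and drives the whole argument.
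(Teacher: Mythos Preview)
Your proof is correct and follows essentially the same route as the paper: compare $\Phi$ with the orthonormal basis $\{\phi_{1,n}\}\cup\{(0,\phi_{2,m}^2)\}$ of $\mathcal{H}^0$ and use the estimate $\|\phi_{2,m}^1\|_{L^2}=O(1/m)$ (from \eqref{eq: phi_2_m^1 - 2}) to obtain square-summability of the perturbation. The paper then invokes Bari's theorem (quadratic closeness plus $\omega$-linear independence), whereas your nilpotency observation $K^2=0$ yields the bounded inverse $(I+K)^{-1}=I-K$ directly and spares you the $\omega$-independence check --- a neat shortcut, but the underlying idea is the same.
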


\begin{proof}
	Setting $\tilde{\phi}_{2,m} = ( 0 , \phi_{2,m}^2)$, the set $\tilde{\Phi} = \{ \phi_{1,n} \,\mid\, n\geq 0 \} \cup \{ \tilde{\phi}_{2,m} \,\mid\, m \geq 0\}$ is a Hilbert basis of $\mathcal{H}^0$.
	Using \eqref{eq: phi_2_m^1 - 2}, it follows from $\Vert \phi_{2,m}^1 \Vert_{L^\infty} = \mathrm{O}(1/m)$ that
	\begin{equation*}
	\sum_{m \geq 0} \Vert \phi_{2,m} - \tilde{\phi}_{2,m} \Vert_{\mathcal{H}^0}^2
	= \sum_{m \geq 0} \Vert \phi_{2,m}^1 \Vert_{L^2}^2 < \infty .
	\end{equation*}
	 Since $\Phi$ is $\omega$-linearly independent, Bari's theorem (see \cite{gohberg1978introduction}) implies that $\Phi$ is a Riesz basis.
	Finally, the Riesz basis property combined with the point spectrum described by \eqref{eq: point spectrum} and the fact that $\mathcal{A}$ is closed imply that $\mathcal{A}$ generates a $C_0$-semigroup (see, e.g.,  \cite{curtain2012introduction}). \qed
\end{proof}

We define the Hilbert space 
$$
H_{(1)}^1(0,1) = \{ f\in H^1(0,L) \,\mid\, f(1)=0 \}
$$
endowed with the inner product $\langle f_1 , f_2 \rangle = \int_0^1 f_1' f_2' \,\mathrm{d}x$. The previous lemma is instrumental for studying the solutions in $L^2$-norm. For the study of the solutions in $H^1$-norm, the following result is key. We define the Hilbert space
\begin{subequations}\label{eq: space H1}
\begin{equation}
	\mathcal{H}^1 = H_{(1)}^1(0,1) \times H^1(0,1) 
\end{equation}
endowed with the inner product
\begin{equation}
\langle (f_1,g_1) , (f_2,g_2) \rangle 
= \int_0^1 (f_1' f_2' + g_1 g_2 + g_1' g_2' ) \,\mathrm{d}x 
\end{equation}
\end{subequations}
and associated norm $\Vert \cdot \Vert_{\mathcal{H}_1}$.

\begin{lemma}\label{lem: Riesz basis H1}
	The set $\Phi^1 = \{ \frac{1}{(n+1/2)\pi}\phi_{1,n} \,\mid\, n\geq 0 \} \cup \{ \frac{1}{\sqrt{1+m^2\pi^2}}\phi_{2,m} \,\mid\, m \geq 0\}$ is a Riesz basis of $\mathcal{H}^1$.
\end{lemma}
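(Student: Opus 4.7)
The plan mirrors the proof of Lemma~\ref{lem: Riesz basis}, using Bari's theorem in $\mathcal{H}^1$ with a reference Hilbert basis whose normalizations match those of $\Phi^1$. The main work lies in controlling the $H_{(1)}^1$-norm of the coupling components $\phi_{2,m}^1$.

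First, I would introduce the reference family
\[
\tilde{\Phi}^1 = \Bigl\{ \tfrac{1}{(n+1/2)\pi}\phi_{1,n} : n\geq 0\Bigr\} \cup \Bigl\{\tfrac{1}{\sqrt{1+m^2\pi^2}}\tilde{\phi}_{2,m} : m\geq 0\Bigr\},
\]
where $\tilde{\phi}_{2,m} = (0,\phi_{2,m}^2)$. A direct integration by parts, using the boundary conditions $\phi_{1,n}^1(1)=(\phi_{1,n}^1)'(0)=0$ and $(\phi_{2,m}^2)'(0)=(\phi_{2,m}^2)'(1)=0$, shows that each family is orthonormal in its component space ($H_{(1)}^1(0,1)$ and $H^1(0,1)$, respectively), and the two families are orthogonal in $\mathcal{H}^1$ since each has one trivial component. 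Completeness follows from the standard fact that the mixed Dirichlet-Neumann cosines and the Neumann cosines form Hilbert bases of $L^2(0,1)$, which transfers to the relevant Sobolev spaces via the underlying self-adjoint Sturm-Liouville operators. Hence $\tilde{\Phi}^1$ is a Hilbert basis of $\mathcal{H}^1$.

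The elements of $\Phi^1$ and $\tilde{\Phi}^1$ coincide on the first family, and differ on the second by $\tfrac{1}{\sqrt{1+m^2\pi^2}}(\phi_{2,m}^1,0)$. Bari's theorem then reduces the proof to establishing
\[
\sum_{m\geq 0}\frac{\|\phi_{2,m}^1\|_{H_{(1)}^1(0,1)}^2}{1+m^2\pi^2}<\infty,
\]
together with $\omega$-linear independence of $\Phi^1$. The finitely many $m\in\Delta_2(a,b)$ or satisfying $b-a=m^2\pi^2$ contribute only finitely many terms, each with finite $H_{(1)}^1$ norm by inspection of \eqref{eq: phi_2_m^1 - 1} and \eqref{eq: phi_2_m^1 - 3}. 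For all sufficiently large $m$, formula \eqref{eq: phi_2_m^1 - 2} applies with the imaginary branch $r_m=i\rho_m$ where $\rho_m=\sqrt{m^2\pi^2-(b-a)}$. A Taylor expansion gives $\rho_m=m\pi+O(1/m)$ and $\cos(\rho_m)=(-1)^m+O(1/m^2)$; in particular $|\cos(\rho_m)|$ is bounded below by a positive constant. Differentiating \eqref{eq: phi_2_m^1 - 2} yields $(\phi_{2,m}^1)'(x)=s\mu_m\cos(\rho_m(1-x))/\cos(\rho_m)$, which is uniformly bounded in $m$ and $x$; therefore $\|\phi_{2,m}^1\|_{H_{(1)}^1}^2=O(1)$ and the series converges as $O(1/m^2)$.

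Finally, $\Phi^1$ is $\omega$-linearly independent because $\Phi$ is (Lemma~\ref{lem: Riesz basis}) and the two families differ only through strictly positive rescalings. Bari's theorem then delivers the Riesz basis property. The main technical obstacle, compared to the $\mathcal{H}^0$ case, is that $(\phi_{2,m}^1)'$ is only $O(1)$ in $L^\infty$ (rather than $O(1/m)$ for $\phi_{2,m}^1$ itself), so one power of $m$ is lost in passing from $L^2$ to $H^1$; the normalization $1/\sqrt{1+m^2\pi^2}$ is precisely chosen to offset this loss.
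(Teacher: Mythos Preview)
Your proof is correct and follows essentially the same approach as the paper: the same reference Hilbert basis $\tilde{\Phi}^1$, Bari's theorem, and the key estimate $\|(\phi_{2,m}^1)'\|_{L^\infty}=O(1)$ obtained by differentiating \eqref{eq: phi_2_m^1 - 2}. You provide more detail (the explicit real form via $r_m=i\rho_m$, the treatment of the exceptional indices, and the justification of $\omega$-linear independence), but the argument is the same.
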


\begin{proof}
The family $\{ \frac{1}{(n+1/2)\pi} \sqrt{2} \cos\left( (n+1/2) \pi x \right) \,\mid\, n\in\mathbb{N} \}$ is a Hilbert basis of $H^1_{(1)}(0,1)$ and the family $\{ \frac{1}{\sqrt{1+m^2\pi^2}} \mu_m \cos\left( m \pi x \right) \,\mid\, m\in\mathbb{N} \}$ is a Hilbert basis of $H^1(0,1)$. Therefore, $\tilde{\Phi}^1 = \{ \frac{1}{(n+1/2)\pi}\phi_{1,n} \,\mid\, n\geq 0 \} \cup \{ \frac{1}{\sqrt{1+m^2\pi^2}} (0,\phi_{2,m}^2) \,\mid\, m \geq 0\}$ is a Hilbert basis of $\mathcal{H}^1$. To invoke Bari's theorem, since $\Phi^1$ is $\omega$-linearly independent, it remains to show that $\sum_{m\geq 0} \left\Vert \frac{1}{\sqrt{1+m^2\pi^2}} \phi^1_{2,m}\right\Vert_{H_{(1)}^1}^2 = \sum_{m\geq 0} \frac{1}{1+m^2\pi^2} \Vert \phi^1_{2,m} \Vert_{H_{(1)}^1}^2$ is finite. For $m$ sufficiently large, we know from \eqref{eq: phi_2_m^1 - 2} that $(\phi_{2,m}^1)'(x) = \frac{s \mu_m}{\cosh(r_m)}\cosh(r_m(1-x))$ where $r_m$ is one of the two square roots of $\lambda_{2,m}-a=b-a-m^2\pi^2$. Hence $\Vert (\phi^1_{2,m})' \Vert_{L^\infty} = \mathrm{O}(1)$, which completes the proof. \qed
\end{proof}

\subsection{Dual Riesz basis}
In order to expand the solutions in the Riesz basis $\Phi$, we need to characterize its dual Riesz basis $\Psi$. 
An easy computation shows that the adjoint operator of $\mathcal{A}$ is given by
	\begin{subequations}\label{eq: adjoint operator}
	\begin{equation}
	\mathcal{A}^*(f,g) = (f''+af,g''+bg)
	\end{equation}
	on the domain
	\begin{equation}
		D(\mathcal{A}^*) = \big\{ (f,g) \in H^2(0,1) \times H^2(0,1) \,\mid\, 	
		 f(1)=f'(0)=g'(1)=0 ,\, g'(0) = s f(0) \big\} . 
	\end{equation}
	\end{subequations}

\begin{lemma}\label{eq: dual Riesz basis}
	The dual Riesz basis of $\Phi$ is the set $\Psi = \{ \psi_{1,n} \,\mid\, n\geq 0 \} \cup \{ \psi_{2,m} \,\mid\, m \geq 0\}$ of generalized eigenvectors of $\mathcal{A}^*$, given by $\psi_{2,m} = (\psi_{2,m}^1,\psi_{2,m}^2)$ for $m \geq 0$, with $\psi_{2,m}^1 = 0$ and 
	\begin{equation*}
		\psi_{2,m}^2(x) = 
		\left\{
		\begin{array}{ll}
			1  & \quad \textrm{if}\ \ m = 0, \\
			\sqrt{2} \cos\left( m \pi x \right)  & \quad \textrm{if}\ \ m \geq 1,
		\end{array}
		\right.
	\end{equation*}
	and $\psi_{1,n} = (\psi_{1,n}^1,\psi_{1,n}^2)$ for $n \geq 0$ with
	\begin{equation*}
		\psi_{1,n}^1(x) = \sqrt{2} \cos\left( (n+1/2) \pi x \right)
	\end{equation*}
	and $\psi_{1,n}^2$ defined as follows:
	\begin{subequations}
	\begin{itemize}
	\item For $n \notin \Delta_1(a,b)$,
	\begin{equation}\label{eq: psi_1_n^2 - 1}
		\psi_{1,n}^2(x) = -\frac{\sqrt{2} s}{r_n^* \sinh(r_n^*)}\cosh(r_n^*(1-x)) 
	\end{equation}
	where $r_n^*$ is one of the two square roots of $\lambda_{1,n}-b$. Note that this function is real valued.
	\item For $n \in \Delta_1(a,b)$, we denote by $k\in\mathbb{N}$ the integer such that $(n,k)\in\Delta(a,b)$.
	\begin{itemize}
	\item[--] For $k=0$,
	\begin{equation}\label{eq: psi_1_n^2 - 2}
		\psi_{1,n}^2(x) = c_0 + \sqrt{2} s x - \frac{\sqrt{2}}{2} s x^2
	\end{equation}
	with $c_0 = - s \sqrt{2} \left( \frac{1}{3} + \frac{1}{(2n+1)^2\pi^2} \right)$.
	\item[--] For $k \geq 1$,
	\begin{equation}\label{eq: psi_1_n^2 - 3}
		\psi_{1,n}^2(x) =  \frac{\sqrt{2}s}{k\pi} (1-x) \sin(k\pi x) + c_n \sqrt{2} \cos(k\pi x) 
	\end{equation}
	with $c_n = - \frac{2s}{\pi^2} \left( \frac{1}{(2n+1)^2} + \frac{1}{4k^2} \right)$.
	\end{itemize}
	\end{itemize}
	\end{subequations}
	Moreover:
	\begin{enumerate}
		\item If $n \notin \Delta_1(a,b)$ then $\lambda_{1,n}$ is an eigenvalue of $\mathcal{A}^*$ with associated eigenvector $\psi_{1,n}$:
		\begin{equation}\label{eq: A* psi_1_n}
			\mathcal{A}^* \psi_{1,n} = \lambda_{1,n} \psi_{1,n} .
		\end{equation}	
		\item $\lambda_{2,m}$ is an eigenvalue of $\mathcal{A}^*$ with associated eigenvector $\psi_{2,m}$:
		\begin{equation}\label{eq: A* psi_2_m}
			\mathcal{A}^* \psi_{2,m} = \lambda_{2,m} \psi_{2,m}.
		\end{equation}	
		\item $(n,m)\in\Delta(a,b)$ if and only if $\lambda = \lambda_{1,n} = \lambda_{2,m}$. In that case $\lambda$ is an eigenvalue of $\mathcal{A}$ of geometric multiplicity $1$ but of algebraic mutiplicity $2$, and
		\begin{subequations}\label{eq: A* psi_1_n psi_2_m mult alg 2}
		\begin{align}
			\mathcal{A}^* \psi_{1,n} & = \lambda \psi_{1,n} - \sqrt{2} s \mu_m \psi_{2,m} \\
			\mathcal{A}^* \psi_{2,m} & = \lambda \psi_{2,m}
		\end{align}	
		\end{subequations}
		where $\mu_m = 1$ if $m=0$ and $\mu_m = \sqrt{2}$ if $m \geq 1$.		
		\item $\Psi$ contains the finite number $\mathrm{Card}(\Delta(a,b))$ of generalized eigenvectors that are not eigenvectors of $\mathcal{A}^*$.
	\end{enumerate} 
\end{lemma}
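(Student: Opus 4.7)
The plan is to exploit the general fact that, since $\Phi$ is a Riesz basis of generalized eigenvectors of $\mathcal{A}$ and $\mathcal{A}$ has compact resolvent with point spectrum described by \eqref{eq: point spectrum}, its dual Riesz basis $\Psi$ is uniquely determined by the biorthogonality relations $\langle \phi_i , \psi_j \rangle = \delta_{ij}$, and must consist of generalized eigenvectors of $\mathcal{A}^*$ for the same (real) eigenvalues, with the same Jordan structure. Accordingly, I would (i) produce by direct computation a candidate family $\Psi$ of generalized eigenvectors of $\mathcal{A}^*$, leaving one free additive constant in each resonant case, (ii) fix these constants by imposing biorthogonality with $\Phi$, and (iii) conclude that $\Psi$ is the dual Riesz basis and that items~1--4 hold.

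For step~(i), I would solve $\mathcal{A}^*(f,g) = \lambda (f,g)$ on $D(\mathcal{A}^*)$ exactly as in the proof of Lemma~\ref{lem: eigenelements A}, noting that in \eqref{eq: adjoint operator} the roles of $f$ and $g$ are swapped compared with $D(\mathcal{A})$. When $g=0$ one recovers readily the family $\psi_{2,m}$ and its eigenvalue relation \eqref{eq: A* psi_2_m}. When $g \neq 0$ the $f$-equation is homogeneous with Dirichlet-Neumann BCs, forcing $\lambda = \lambda_{1,n}$ and $f = \psi_{1,n}^1$; then $g \in H^2(0,1)$ solves $g''+(b-\lambda_{1,n})g = 0$ with $g'(1)=0$ and $g'(0) = s f(0) = s\sqrt{2}$. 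The $2{\times}2$ solvability determinant is proportional to $(r_n^*)^2 \sinh(r_n^*)$, which is nonzero exactly when $n \notin \Delta_1(a,b)$, yielding \eqref{eq: psi_1_n^2 - 1} and \eqref{eq: A* psi_1_n}. When $n \in \Delta_1(a,b)$ with companion index $k$, I would instead search for a generalized eigenvector satisfying $\mathcal{A}^*\psi_{1,n} = \lambda \psi_{1,n} + \nu \psi_{2,k}$. A variation-of-constants computation strictly parallel to \eqref{eq: eigenelements - constant cm} forces $\nu = -\sqrt{2}s\mu_k$ and produces \eqref{eq: psi_1_n^2 - 2}--\eqref{eq: psi_1_n^2 - 3} up to an additive term $c \sqrt{2}\cos(k\pi x)$ lying in the kernel of $\mathcal{A}^*-\lambda$, with $k=0$ treated separately because $\psi_{2,0}^2 = 1$.

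For step~(ii), the easy biorthogonality relations $\langle \phi_{1,n},\psi_{2,m}\rangle = 0$, $\langle \phi_{1,n},\psi_{1,n'}\rangle = \delta_{nn'}$ and $\langle \phi_{2,m},\psi_{2,m'}\rangle = \delta_{mm'}$ reduce, using $\psi_{2,m}^1 = \phi_{1,n}^2 = 0$, to the orthonormality of $\{\sqrt{2}\cos((n+1/2)\pi x)\}_{n\geq 0}$ and of $\{\mu_m\cos(m\pi x)\}_{m\geq 0}$ in $L^2(0,1)$. The only nontrivial condition is
\[
\langle \phi_{2,m},\psi_{1,n}\rangle = \int_0^1 \phi_{2,m}^1 \psi_{1,n}^1\,\mathrm{d}x + \int_0^1 \phi_{2,m}^2 \psi_{1,n}^2\,\mathrm{d}x = 0.
\]
For $n\notin \Delta_1(a,b)$, this vanishing follows from the explicit formulas for $\phi_{2,m}^1$ and $\psi_{1,n}^2$ combined with an integration by parts using the ODEs $(\phi_{2,m}^1)'' = (\lambda_{2,m}-a)\phi_{2,m}^1$ and $(\psi_{1,n}^2)'' = (\lambda_{1,n}-b)\psi_{1,n}^2$ together with the boundary conditions in $D(\mathcal{A})$ and $D(\mathcal{A}^*)$, the remaining boundary terms cancelling thanks to the coupling $f'(0)=sg(0)$ on one side and $g'(0)=sf(0)$ on the other. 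In the resonant case $(n,k) \in \Delta(a,b)$, substituting \eqref{eq: phi_2_m^1 - 3} (with $c_m = 0$) for $m=k$ on one side and the parametrized generalized expression for $\psi_{1,n}^2$ on the other, and evaluating the resulting polynomial-times-trigonometric integrals by product-to-sum identities, leaves $c_0$ or $c_n$ as the unique solution of a linear equation, yielding the announced values. Finally, step~(iii) and items~1--4 follow at once: the eigenvalue and Jordan relations \eqref{eq: A* psi_1_n}--\eqref{eq: A* psi_1_n psi_2_m mult alg 2} are built into the construction, the count in item~4 matches $\mathrm{Card}(\Delta(a,b))$ by the same enumeration as in Lemma~\ref{lem: eigenelements A}, and the dual-Riesz-basis property is secured by biorthogonality and uniqueness of the dual of a Riesz basis.

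The main obstacle I anticipate is the resonant computation of $c_0$ and $c_n$ in step~(ii): the integrands are products of polynomials and trigonometric functions at the two distinct frequencies $k\pi$ and $(k+1/2)\pi$ (or $(n+1/2)\pi$), and one must carefully track the denominators $(2n+1)^2$ and $4k^2$ that appear in the stated values. Every other step is either an adaptation of Lemma~\ref{lem: eigenelements A} or a straightforward Fourier-series calculation.
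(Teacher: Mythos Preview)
Your approach is essentially identical to the paper's: compute the (generalized) eigenvectors of $\mathcal{A}^*$ by mimicking the proof of Lemma~\ref{lem: eigenelements A}, then fix the free constant $c_n$ (resp.\ $c_0$) in the resonant case by imposing the single nontrivial biorthogonality condition $\langle \phi_{2,k},\psi_{1,n}\rangle = 0$. The only slip is in your case analysis: since $D(\mathcal{A}^*)$ decouples the $f$-equation (with $f(1)=f'(0)=0$) and puts the coupling on $g$, the dichotomy should read ``when $f=0$ one recovers $\psi_{2,m}$'' and ``when $f\neq 0$ one is forced to $\lambda=\lambda_{1,n}$, $f=\psi_{1,n}^1$, then solve for $g$''---you wrote $g$ in place of $f$, but the subsequent computations (determinant, formula~\eqref{eq: psi_1_n^2 - 1}, variation of constants for the resonant case) are all correct.
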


\begin{proof}
Similarly as in the proof of Lemma~\ref{lem: eigenelements A}, we search solutions $(f,g)\in D(\mathcal{A}^*)$ of $\mathcal{A}^*(f,g)=\lambda(f,g)$. We only give the details for the case $n\in\Delta_1(a,b)$. There exists an integer $k \geq 0$ such that $(n,k)\in\Delta(a,b)$, hence $\lambda = \lambda_{1,n} = \lambda_{2,k}$. The geometric multiplicity of $\lambda$ is $1$. Hence, knowing that $\psi_{2,k}$ is an eigenvector for $\lambda$, we search a generalized eigenvector, that is a solution $(f,g)\in D(\mathcal{A}^*)$ of $\mathcal{A}^*(f,g)=\lambda(f,g)+\nu\psi_{2,k}$ for some $\nu\neq0$ to be determined. We detail the case $k \geq 1$, the case $k=0$ being similar. Proceeding as in the proof of Lemma~\ref{lem: eigenelements A}, we infer that
	\begin{equation*}
		 \psi_{1,n}^2(x) =  \frac{\sqrt{2}s}{k\pi}\sin(k\pi x) + c_n \sqrt{2} \cos(k\pi x) 
		 - \frac{2s}{k\pi} \int_0^x \sin(k\pi(x-s)) \psi_{2,k}^2(s) \,\mathrm{d}s 
	\end{equation*}
	for an arbitrary $c_n\in\mathbb{R}$. A direct integration shows that $\int_0^x \sin(k\pi(x-s)) \psi_{2,k}^2(s) \,\mathrm{d}s = \frac{\sqrt{2}}{2} x \sin(k \pi x)$, yielding
	\begin{equation*}
		\psi_{1,n}^2(x) =  \frac{\sqrt{2}s}{k\pi} (1-x) \sin(k\pi x) + c_n \sqrt{2} \cos(k\pi x) .
	\end{equation*}
	It remains to compute $c_n$. Since we want to ensure that $\Psi$ is the dual Riesz basis of $\Phi$, we must select $c_n$ so that $\langle \phi_{2,k} , \psi_{1,n} \rangle = 0$. Easy computations give the claimed results. \qed
\end{proof}

\section{Spectral reduction and controllability of the modes}\label{sec: spectral reduction}

\subsection{Homogeneous representation}
The state at time $t$ is $\mathcal{X}(t) = (y(t,\cdot),z(t,\cdot))$.
Let us make a change of variable.
We define $\varphi(x) = x-x^2/2$, so that $\varphi(0) = \varphi'(1) = 0$ and $\varphi'(0) = 1$, and we set
\begin{equation}\label{eq: change of variable}
	\tilde{z}(t,x) = z(t,x) - \varphi(x) u(t) .
\end{equation}
Assuming that $u$ is derivable (which will be structurally ensured by our choice of control strategy later on), we infer from \eqref{eq: system} that
	\begin{align*}
		& \partial_t y = \partial_{xx} y + a y ,\\
		& \partial_t \tilde{z} = \partial_{xx} \tilde{z} + b \tilde{z} + \alpha u + \beta \dot{u}, \\
		& \partial_x y(t,0) = s \tilde{z}(t,0) ,\ \ y(t,1) = \partial_x \tilde{z}(t,1) = \partial_x \tilde{z}(t,0) = 0 ,
	\end{align*}
with $\alpha(x) = \varphi''(x) + b \varphi(x)$ and $\beta(x) = - \varphi(x)$. Setting $\tilde{\mathcal{X}}(t) = (y(t,\cdot),\tilde{z}(t,\cdot))$ and $v = \dot{u}$, we infer that
\begin{align}
	\frac{\mathrm{d}\tilde{\mathcal{X}}}{dt} &= \mathcal{A} \tilde{\mathcal{X}} + (0,\alpha) u + (0,\beta) v , \label{eq: abstract system homogeneous} \\
	\dot u &= v .
\end{align}
The new state is thus the pair $(\tilde{\mathcal{X}},u)$: not only we have made a change of variable, but we have also augmented the state by considering $u$ as a state and $v=\dot u$ as the new control.
Note that
\begin{equation}\label{eq: change of variable abstract}
\tilde{\mathcal{X}} = \mathcal{X} + (0,\beta) u .
\end{equation}

\subsection{Dynamics of the modes}

In order to expand the solutions in the Riesz basis $\Phi$, we define the coefficients $\tilde{x}_{1,n} = \langle \tilde{\mathcal{X}} , \psi_{1,n} \rangle$, $\tilde{x}_{2,m} = \langle \tilde{\mathcal{X}} , \psi_{2,m} \rangle$, $x_{1,n} = \langle \mathcal{X} , \psi_{1,n} \rangle$, $x_{2,m} = \langle \mathcal{X} , \psi_{2,m} \rangle$, $\alpha_{1,n} = \langle (0,\alpha) , \psi_{1,n} \rangle , \quad \alpha_{2,m} = \langle (0,\alpha) , \psi_{2,m} \rangle$, $\beta_{1,n} = \langle (0,\beta) , \psi_{1,n} \rangle$, and $\beta_{2,m} = \langle (0,\beta) , \psi_{2,m} \rangle$. Then, using Lemma~\ref{lem: Riesz basis}, we have 
\begin{equation}
\tilde{\mathcal{X}}(t)  = (y(t,\cdot),\tilde{z}(t,\cdot))  = \sum_{n \geq 0} \tilde{x}_{1,n}(t) \phi_{1,n} + \sum_{m \geq 0} \tilde{x}_{2,m}(t) \phi_{2,m} . \label{eq: series expansion of the state}
\end{equation}
Expanding \eqref{eq: change of variable abstract} in the Riesz basis $\Psi$ gives
\begin{subequations}\label{eq: projection change of basis formula}
\begin{align}
	\tilde{x}_{1,n} & = x_{1,n} + \beta_{1,n} u , \quad && n \geq 0 , \label{eq: projection change of basis formula - 1} \\
	\tilde{x}_{2,m} & = x_{2,m} + \beta_{2,m} u , \quad && m \geq 0 . \label{eq: projection change of basis formula - 2}
\end{align}
\end{subequations}
\begin{itemize}
\item For $n \notin \Delta_1(a,b)$, using \eqref{eq: A* psi_1_n}, the projection of \eqref{eq: abstract system homogeneous} onto $\phi_{1,n}$ gives
\begin{equation}\label{eq: dynamics tilde_x1n}
	\dot{\tilde{x}}_{1,n} = \lambda_{1,n} \tilde{x}_{1,n} + \alpha_{1,n} u + \beta_{1,n} v
\end{equation}
that is, using the change of variable formula \eqref{eq: projection change of basis formula - 1} and recalling that $v = \dot{u}$, 
\begin{equation}\label{eq: dynamics x1n}
	\dot{x}_{1,n} = \lambda_{1,n} x_{1,n} + \nu_{1,n} u 
\end{equation}
where $\nu_{1,n} = \alpha_{1,n} + \lambda_{1,n} \beta_{1,n}$.
\item For $m \geq 0$, using \eqref{eq: A* psi_2_m}, the projection of \eqref{eq: abstract system homogeneous} onto $\phi_{2,m}$ gives
\begin{equation}\label{eq: dynamics tilde_x2m}
	\dot{\tilde{x}}_{2,m} = \lambda_{2,m} \tilde{x}_{2,m} + \alpha_{2,m} u + \beta_{2,m} v .
\end{equation}
From the change of variable formula \eqref{eq: projection change of basis formula - 2}, we have 
\begin{equation}\label{eq: dynamics x2m}
	\dot{x}_{2,m} = \lambda_{2,m} x_{2,m} + \nu_{2,m} u 
\end{equation}
where $\nu_{2,m} = \alpha_{2,m} + \lambda_{2,m} \beta_{2,m}$.
\item For $n\in\Delta_1(a,b)$, let $m \geq 0$ be such that $(n,m)\in\Delta(a,b)$, meaning that $\lambda = \lambda_{1,n} = \lambda_{2,m}$. In that case, using \eqref{eq: A* psi_1_n psi_2_m mult alg 2}, the projection of \eqref{eq: abstract system homogeneous} onto $\phi_{1,n}$ and $\phi_{2,m}$ gives
\begin{equation}\label{eq: dynamics tilde_x1n tilde_x2m mult 2}
	\begin{bmatrix} \dot{\tilde{x}}_{1,n} \\ \dot{\tilde{x}}_{2,m} \end{bmatrix}
	= \underbrace{\begin{bmatrix} \lambda & -\sqrt{2} s \mu_m \\ 0 & \lambda \end{bmatrix}}_{= M_{n}} \begin{bmatrix} \tilde{x}_{1,n} \\ \tilde{x}_{2,m} \end{bmatrix} + \begin{bmatrix} \alpha_{1,n} \\ \alpha_{2,m} \end{bmatrix} u + \begin{bmatrix} \beta_{1,n} \\ \beta_{2,m} \end{bmatrix} v
\end{equation}
hence, based on \eqref{eq: projection change of basis formula}, 
\begin{equation}\label{eq: dynamics x1n x2m mult 2}
	\begin{bmatrix} \dot{x}_{1,n} \\ \dot{x}_{2,m} \end{bmatrix}
	= M_n \begin{bmatrix} x_{1,n} \\ x_{2,m} \end{bmatrix} + \begin{bmatrix} \nu_{1,n} \\ \nu_{2,m} \end{bmatrix} u 
\end{equation}
where $\nu_{1,n} = \alpha_{1,n} + \lambda_{1,n} \beta_{1,n} - \sqrt{2}s\mu_m \beta_{2,m}$ and $\nu_{2,m} = \alpha_{2,m} + \lambda_{2,m} \beta_{2,m}$.
\end{itemize}

\subsection{Controllability properties}

In order to characterize the controllabity of each mode of the PDE cascade \eqref{eq: system}, we first evaluate $\nu_{1,n}$ and $\nu_{2,m}$.

\begin{lemma}\label{lem: mu_i_k}
$\nu_{i,k} = - \psi_{i,k}^2(0)$ for all $k \in\mathbb{N}$ and $i\in\{1,2\}$.
\end{lemma}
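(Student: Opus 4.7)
The plan is to unfold the definitions of $\alpha_{i,k},\beta_{i,k}$ in terms of $\varphi$ and then perform two integrations by parts against the second component $\psi_{i,k}^{2}$ of the dual basis, exploiting the (possibly generalized) eigenvalue equation satisfied by $\psi_{i,k}^{2}$ and the boundary conditions inherited from $D(\mathcal{A}^{*})$.

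Concretely, since $(0,\alpha)$ and $(0,\beta)$ have vanishing first component, we have
\[
\alpha_{i,k} = \int_{0}^{1}(\varphi''+b\varphi)\,\psi_{i,k}^{2}\,\mathrm{d}x,\qquad
\beta_{i,k} = -\int_{0}^{1}\varphi\,\psi_{i,k}^{2}\,\mathrm{d}x,
\]
so that, in the non-generalized case,
\[
\nu_{i,k}=\alpha_{i,k}+\lambda_{i,k}\beta_{i,k}
=\int_{0}^{1}\varphi''\psi_{i,k}^{2}\,\mathrm{d}x
+(b-\lambda_{i,k})\int_{0}^{1}\varphi\,\psi_{i,k}^{2}\,\mathrm{d}x.
\]
The eigenequation $\mathcal{A}^{*}\psi_{i,k}=\lambda_{i,k}\psi_{i,k}$ yields $(b-\lambda_{i,k})\psi_{i,k}^{2}=-(\psi_{i,k}^{2})''$, so the second integral becomes $-\int_{0}^{1}\varphi(\psi_{i,k}^{2})''\,\mathrm{d}x$. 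I would then integrate by parts twice, using $\varphi(0)=0$, $\varphi(1)=1/2$, $\varphi'(0)=1$, $\varphi'(1)=0$, together with the boundary conditions for elements of $D(\mathcal{A}^{*})$ applied to $\psi_{i,k}$ (namely $(\psi_{i,k}^{2})'(1)=0$ and $(\psi_{i,k}^{2})'(0)=s\psi_{i,k}^{1}(0)$, which is $s\sqrt{2}$ for $i=1$ and $0$ for $i=2$). The bracket $[\varphi(\psi_{i,k}^{2})']_{0}^{1}$ vanishes (since $\varphi(0)=0$ and $(\psi_{i,k}^{2})'(1)=0$), while $[\varphi'\psi_{i,k}^{2}]_{0}^{1}=-\psi_{i,k}^{2}(0)$ (since $\varphi'(1)=0$ and $\varphi'(0)=1$). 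After the two integrations by parts, the $\int\varphi''\psi_{i,k}^{2}\,\mathrm{d}x$ terms cancel and one is left with exactly $-\psi_{i,k}^{2}(0)$.

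For the generalized case $n\in\Delta_{1}(a,b)$, with $m$ the integer such that $(n,m)\in\Delta(a,b)$, the relation \eqref{eq: A* psi_1_n psi_2_m mult alg 2} gives $(b-\lambda)\psi_{1,n}^{2}=-(\psi_{1,n}^{2})''-\sqrt{2}\,s\mu_{m}\psi_{2,m}^{2}$; substituting into the expression $\nu_{1,n}=\alpha_{1,n}+\lambda_{1,n}\beta_{1,n}-\sqrt{2}s\mu_{m}\beta_{2,m}$ produces an extra term $\sqrt{2}s\mu_{m}\!\int_{0}^{1}\varphi\psi_{2,m}^{2}\,\mathrm{d}x$ which cancels precisely with the correction coming from $-\sqrt{2}s\mu_{m}\beta_{2,m}=\sqrt{2}s\mu_{m}\!\int_{0}^{1}\varphi\psi_{2,m}^{2}\,\mathrm{d}x$ (with opposite sign). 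One is then reduced to the same integration-by-parts computation, yielding again $-\psi_{1,n}^{2}(0)$. The case $\psi_{2,m}$ never needs the generalized correction because $\psi_{2,m}$ is always an honest eigenvector of $\mathcal{A}^{*}$.

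There is no real obstacle here: the argument is essentially a bookkeeping exercise in integration by parts. The only point requiring care is tracking the boundary values of $\psi_{i,k}^{2}$ at $0$ and $1$ (which come from $D(\mathcal{A}^{*})$, not from $D(\mathcal{A})$), and checking that in the generalized case the two occurrences of the $\sqrt{2}s\mu_{m}\!\int\varphi\psi_{2,m}^{2}$ correction term cancel with the correct signs.
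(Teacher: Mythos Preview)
Your proposal is correct and follows essentially the same approach as the paper: two integrations by parts combined with the (generalized) eigenequation for $\psi_{i,k}$ and the boundary data $\varphi(0)=\varphi'(1)=0$, $\varphi'(0)=1$, $(\psi_{i,k}^{2})'(1)=0$. The only cosmetic difference is the order of operations---the paper first integrates $\int_0^1\varphi''\psi_{i,k}^{2}\,\mathrm{d}x$ by parts and then invokes $\mathcal{A}^{*}\psi_{i,k}=\lambda_{i,k}\psi_{i,k}$ (or its generalized variant) to kill the remaining bulk term, whereas you substitute the eigenequation into $(b-\lambda_{i,k})\int_0^1\varphi\psi_{i,k}^{2}\,\mathrm{d}x$ first; both routes amount to Green's identity $\int_0^1(\varphi''\psi-\varphi\psi'')\,\mathrm{d}x=[\varphi'\psi-\varphi\psi']_0^1=-\psi_{i,k}^{2}(0)$. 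In the generalized case your prose about the sign of the extra $\sqrt{2}s\mu_m\int_0^1\varphi\psi_{2,m}^{2}\,\mathrm{d}x$ term is slightly imprecise (the eigenequation contributes it with a minus sign, the correction $-\sqrt{2}s\mu_m\beta_{2,m}$ with a plus sign), but you clearly identify the cancellation and the conclusion is correct.
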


\begin{proof}
For $k \geq 0$, in the case where either (i) $i = 2$; or (ii) $i = 1$ with $k \notin \Delta_1(a,b)$, we evaluate the terms $\nu_{i,k} = \alpha_{i,k} + \lambda_{i,k} \beta_{i,k}$ appearing in the dynamics \eqref{eq: dynamics x1n} and \eqref{eq: dynamics x2m}, as follows:
$$
	\nu_{i,k}
	 = \langle (0,\alpha) , \psi_{i,k} \rangle + \lambda_{i,k} \langle (0,\beta) , \psi_{i,k} \rangle 
	 = \langle \varphi''+b\varphi , \psi_{i,k}^2 \rangle - \lambda_{i,k} \langle \varphi , \psi_{i,k}^2 \rangle .
$$
Using integration by parts we have
\begin{multline*}
	 \langle \varphi'' , \psi_{i,k}^2 \rangle = \int_0^1 \varphi''(x) \psi_{i,k}^2(x) \,\mathrm{d}x 
	 = \left[ \varphi'(x) \psi_{i,k}^2(x) - \varphi(x) (\psi_{i,k}^2)'(x) \right]_0^1  + \int_0^1 \varphi(x) (\psi_{i,k}^2)''(x) \,\mathrm{d}x \\
	 = - \psi_{i,k}^2(0) + \langle \varphi , (\psi_{i,k}^2)'' \rangle
\end{multline*}
where we have used the conditions $\varphi(0) = \varphi'(1) = 0$, $\varphi'(0) = 1$, and $(\psi_{i,k}^2)'(1)=0$. This shows that
\begin{align*}
	\nu_{i,k}
	& = - \psi_{i,k}^2(0) + \langle \varphi , (\psi_{i,k}^2)'' + b \psi_{i,k}^2 \rangle - \lambda_{i,k} \langle \varphi , \psi_{i,k}^2 \rangle \\
	& = - \psi_{i,k}^2(0) + \langle (0,\varphi) , \mathcal{A}^* \psi_{i,k} \rangle - \lambda_{i,k} \langle (0,\varphi) , \psi_{i,k} \rangle
\end{align*}
which gives $\nu_{i,k} = - \psi_{i,k}^2(0)$ because $\mathcal{A}^* \psi_{i,k} = \lambda_{i,k} \psi_{i,k}$.
\\[2mm]
Consider now the case $i = 1$ and $k\in\Delta_1(a,b)$. Let $l\geq0$ be the integer such that $(k,l) \in \Delta(a,b)$. Then $\nu_{1,k} = \alpha_{1,k} + \lambda_{1,k} \beta_{1,k} - \sqrt{2}s\mu_l \beta_{2,l}$ and the previous approach shows that 
\begin{equation*}
	\nu_{1,k}
	= - \psi_{1,k}^2(0) + \langle (0,\varphi) , \mathcal{A}^* \psi_{1,k} \rangle 
	- \lambda_{1,k} \langle (0,\varphi) , \psi_{1,k} \rangle - \sqrt{2}s\mu_l \beta_{2,l}
\end{equation*}
which gives $\nu_{1,k} = - \psi_{1,k}^2(0)$ because $\mathcal{A}^* \psi_{1,k} = \lambda_{1,k} \psi_{1,k} - \sqrt{2} s \mu_l \psi_{2,l}$. \qed
\end{proof}

We are now in a position to characterize the controllability property for each individual mode of the PDE cascade \eqref{eq: system}. This analysis is extended in Appendix~\ref{appendix} to the study of the exact observability properties of the full PDE system \eqref{eq: system} in suitable Hilbert spaces.
\begin{itemize}
\item For $n \notin \Delta_1(a,b)$, the mode $\lambda_{1,n}$ has a one-dimensional dynamics given by \eqref{eq: dynamics x1n}. It is controllable if and only if $\nu_{1,n} = - \psi_{1,n}^2(0) = \frac{\sqrt{2}s}{r_n^*} \coth(r_n^*) \neq 0$, that is $\cosh(r_n^*) \neq 0$. Recalling that $r_n^*$ is one of the two square roots of $\lambda_{1,n}-b=a-b-(n+1/2)^2\pi^2$, the latter condition holds if and only if $n \notin \Theta(a,b)$ with
\begin{equation}
		\Theta(a,b)  = \{ n\in\mathbb{N} \,\mid\, \exists k\in\mathbb{N}  \;\mathrm{s.t.} \  
		 4(b-a) = ( (2k+1)^2 - (2n+1)^2 ) \pi^2 \}. \label{eq: def Theta(a,b)}
\end{equation}
\item For $m \notin \Delta_2(a,b)$, the mode $\lambda_{2,m}$ has a one-dimensional dynamics given by \eqref{eq: dynamics x2m}, which is controllable because $\nu_{2,m} = - \psi_{2,m}^2(0) = - \mu_m \neq 0$.
\item For $(n,m)\in\Delta(a,b)$, we have $\lambda = \lambda_{1,n} = \lambda_{2,m}$ with a two-dimensional dynamics given by \eqref{eq: dynamics x1n x2m mult 2}. This dynamics is controllable because $s\mu_m \neq 0$ and $\nu_{2,m} = - \psi_{2,m}^2(0) = - \mu_m \neq 0$.
\end{itemize}

\begin{remark}\label{rmk: discussion Theta(a,b)}
The loss of controllability for at least one mode of the system \eqref{eq: system} occurs if and only if the reaction terms $a,b$ are selected such that $\Theta(a,b) \neq \emptyset$.

This controllability result for the modes indicates that the PDE cascade \eqref{eq: system} cannot be exponentially stabilized as soon as there exists an integer $n \geq 0$ for which $n \in \Theta(a,b)$ and $\lambda_{1,n} \geq 0$. A first example of such a scenario is when $a=b$, which gives $\Theta(a,b) = \mathbb{N}$. In this situation, none of the modes $\lambda_{1,n}$ associated with the second PDE \eqref{eq: system - y} of the cascade is controllable. In this case, the system cannot be exponentially stabilized as soon as $\lambda_{1,0} \geq 0$, that is when $a=b \geq \pi^2/4$. A second example for $a \neq b$ is $a = \pi^2$ and $b = 3 \pi^2$ for which the unstable mode $\lambda_{1,0} = 3\pi^2/4 > 0$ is not controllable since $\lambda_{1,0} - b = - 9 \pi^2/4$, hence $r_0^* = 3i\pi/4$ and $\nu_1(0) = - \psi_{1,n}^2(0) = \frac{\sqrt{2}s}{r_0^*} \coth(r_0^*) = 0$. 
\end{remark}

\begin{remark}
Note that, as soon as $a \neq b$, we have $\mathrm{Card}(\Theta(a,b)) < \infty$ with $n \leq \vert b-a \vert / \pi^2$ for all $n \in \Theta(a,b)$. Indeed, for any $n \in \Theta(a,b)$ there exists $k\in\mathbb{N}$ such that $4(b-a)/\pi^2 = (2k+1)^2 - (2n+1)^2 = 4 (k-n)(k+n+1)$. Since $a \neq b$ we must have $k \neq n$ hence $\vert b-a \vert / \pi^2 \geq k+n+1 \geq n$.  
\end{remark}

\begin{remark}
	For a given $a\in\mathbb{R}$, the set of $b \in \mathbb{R}$ such that $\Theta(a,b) \neq \emptyset$ is an infinite countable set consisting of isolated values. More precisely, if $b \neq b'$ are such that $\Theta(a,b) \neq \emptyset$ and $\Theta(a,b') \neq \emptyset$ then, based on \eqref{eq: def Theta(a,b)}, $\frac{4}{\pi^2}(b-a)$ and $\frac{4}{\pi^2}(b'-a)$ are two distinct integers, hence $\vert b - b' \vert \geq \pi^2/4$. The same conclusion holds when switching the roles of $a$ and $b$.
\end{remark}

\begin{remark}\label{rem: system 2}
The spectral reduction done in this section for the PDE cascade \eqref{eq: system} also applies in a straightforward way to the PDE cascade \eqref{eq: system 2} by defining in the change of variable formula \eqref{eq: change of variable} the function $\varphi(x)=x^2/2$ that was selected so that $\varphi(0)=\varphi'(0)=0$ and $\varphi'(1)=1$. By doing so, the only change in the above developments is that $\nu_{i,k} = \psi_{i,k}^2(1)$. Noting that $\psi_{2,m}^2(1) = (-1)^m \mu_m \neq 0$ and, in the case $n\notin\Delta_1(a,b)$, $\psi_{1,n}^2(1) = -\frac{\sqrt{2} s}{r_n^* \sinh(r_n^*)} \neq 0$, then it appears that each mode $\lambda_{i,k}$ of the PDE cascade \eqref{eq: system 2} is always controllable. This result highlights the difference between the two studied PDE cascades. While for the PDE cascade \eqref{eq: system} both control input $u$ and coupling between the two heat equations are collocated at the same boundary ($x=0$) for the $z$-equation, the control $u$ and the coupling occur at different boundaries in the PDE cascade \eqref{eq: system 2}. Hence, it appears that a non-collocated setting is more favorable to promote controllability properties of the plant. 
\end{remark}

\section{State-feedback}\label{sec: state-feedback}
In this section, we design an explicit state-feedback control strategy for the PDE cascade \eqref{eq: system}, based on the spectral analysis performed previously.
\\[2mm]
We first sort the elements of $\Delta(a,b)$. When $(n_i,m_i)\in\Delta(a,b)$, we have: (i) $n_1 = n_2$ if and only if $m_1 = m_2$; (ii) $n_1 < n_2$ if and only if $m_1 < m_2$. So, with $N_\Delta = \mathrm{Card}(\Delta(a,b))$, we can define two increasing functions $\theta_{1},\theta_{2} : \{ 0 , \ldots , N_\Delta-1 \} \rightarrow \mathbb{N}$ such that $\Delta(a,b) = \{ ( \theta_{1}(i) , \theta_{2}(i)) \,\mid\, 0 \leq i \leq N_\Delta-1 ) \}$. This implies that $\Delta_1(a,b) = \{ \theta_{1}(i) \,\mid\, 0 \leq i \leq N_\Delta-1 ) \}$ and $\Delta_2(a,b) = \{ \theta_{2}(i) \,\mid\, 0 \leq i \leq N_\Delta-1 ) \}$.
\\[2mm]
Let $\delta > 0$ be a targeted exponential decay rate for the closed-loop system. Since $\lambda_{1,n}$ and $\lambda_{2,m}$ tend $-\infty$ as $n,m \rightarrow +\infty$, we fix integers $N_{0},M_{0} \geq 0$ such that $\lambda_{1,N_{0}+1} < -\delta$ and $\lambda_{2,M_{0}+1} < -\delta$. In order to avoid cumbersome notations, and to keep the presentation as concise as possible, we further choose the integers $N_0,M_0$ such that $N_0 \geq \theta_1(N_\Delta) = \max\Delta_1(a,b)$ and $M_0 \geq \theta_2(N_\Delta) = \max\Delta_2(a,b)$. This implies that all modes of the PDE cascade \eqref{eq: system} that are not algebraically simple are captured among the $0 \leq n \leq N_0$ and $0 \leq m \leq M_0$ first modes. Even unnecessary, this choice  considerably simplifies the writing of the dynamics, the definition of the Lyapunov functions, and the subsequent stability analysis.
\\[2mm]
We now define $\varphi_1$ (resp., $\varphi_2$) as a permutation of $\{ 0 , \ldots , N_0 \}$ (resp., of $\{ 0 , \ldots , M_0 \}$) so that the indices corresponding to the modes that are not algebraically simple are listed first. More precisely, we define $\varphi_1(i) = \theta_1(i)$ for $i\in\{0,\ldots,N_\Delta-1\}$ while $\varphi_1$ is increasing on $\{ N_\Delta , \ldots , N_0 \}$. Similarly, $\varphi_2(i) = \theta_2(i)$ for $i\in\{0,\ldots,N_\Delta-1\}$ while $\varphi_2$ is increasing on $\{ N_\Delta , \ldots , M_0 \}$.
\\[2mm]
We are now in position to write the dynamics satisfied by the $0 \leq n \leq N_0$ and $0 \leq m \leq M_0$ first modes of the PDE cascade \eqref{eq: system}. Defining the vectors
\begin{align*}
	X_{1,1} & = \begin{bmatrix} \tilde{x}_{1,\varphi_1(0)} & \tilde{x}_{2,\varphi_2(0)} &  \ldots & \tilde{x}_{1,\varphi_1(N_\Delta-1)} & \tilde{x}_{2,\varphi_2(N_\Delta-1)} \end{bmatrix}^\top , \\
	X_{1,2} & = \begin{bmatrix} \tilde{x}_{1,\varphi_1(N_\Delta)} & \ldots & \tilde{x}_{1,\varphi_1(N_{0})} \end{bmatrix}^\top , \\
	X_{1,3} & = \begin{bmatrix} \tilde{x}_{2,\varphi_2(N_\Delta)} & \ldots & \tilde{x}_{2,\varphi_2(M_{0})} \end{bmatrix}^\top ,
\end{align*}
we infer from \eqref{eq: dynamics tilde_x1n}, \eqref{eq: dynamics tilde_x2m} and \eqref{eq: dynamics tilde_x1n tilde_x2m mult 2} that
\begin{equation*}
	\dot{X}_{1,i} = A_{1,i} x_{1,i} + B_{1,u,i} u + B_{1,v,i} v , \quad i\in\{1,2,3\},
\end{equation*}
where 
\begin{align*}
A_{1,1} & = \mathrm{diag}(M_{\varphi_1(0)},\ldots,M_{\varphi_1(N_\Delta-1)}) , \quad
A_{1,2}  = \mathrm{diag}(\lambda_{1,\varphi_1(N_\Delta)},\ldots,\lambda_{1,\varphi_1(N_{0})}) , \\
A_{1,3} & = \mathrm{diag}(\lambda_{2,\varphi_2(N_\Delta)},\ldots,\lambda_{2,\varphi_2(M_{0})}) , \\
B_{1,u,1} & = \begin{bmatrix} \alpha_{1,\varphi_1(0)} & \alpha_{2,\varphi_2(0)} & \ldots & \alpha_{1,\varphi_1(N_\Delta-1)} & \alpha_{2,\varphi_2(N_\Delta-1)} \end{bmatrix}^\top , \\
B_{1,u,2} & = \begin{bmatrix} \alpha_{1,\varphi_1(N_\Delta)} & \ldots & \alpha_{1,\varphi_1(N_{0})} \end{bmatrix}^\top , \quad
B_{1,u,3} = \begin{bmatrix} \alpha_{2,\varphi_2(N_\Delta)} & \ldots & \alpha_{2,\varphi_2(M_{0})} \end{bmatrix}^\top , \\
B_{1,v,1} & = \begin{bmatrix} \beta_{1,\varphi_1(0)} & \beta_{2,\varphi_2(0)} & \ldots & \beta_{1,\varphi_1(N_\Delta-1)} & \beta_{2,\varphi_2(N_\Delta-1)} \end{bmatrix}^\top , \\
B_{1,v,2} & = \begin{bmatrix} \beta_{1,\varphi_1(N_\Delta)} & \ldots & \beta_{1,\varphi_1(N_{0})} \end{bmatrix}^\top , \quad
B_{1,v,3}  = \begin{bmatrix} \beta_{2,\varphi_2(N_\Delta)} & \ldots & \beta_{2,\varphi_2(M_{0})} \end{bmatrix}^\top .
\end{align*}
Defining the vector $X_1 = \mathrm{col}( X_{1,1} , X_{1,2} , X_{1,3} )$, this implies that
\begin{equation}\label{eq: dynamics X1}
	\dot{X}_1 = A_1 X_1 + B_{1,u} u + B_{1,v} v 
\end{equation}
where $A_1 = \mathrm{diag}(A_{1,1},A_{1,2},A_{1,3})$,
\begin{equation*}
B_{1,u} = \begin{bmatrix} B_{1,u,1} \\ B_{1,u,2} \\ B_{1,u,3} \end{bmatrix} ,\quad
B_{1,v} = \begin{bmatrix} B_{1,v,1} \\ B_{1,v,2} \\ B_{1,v,3} \end{bmatrix} .
\end{equation*}
Finally, augmenting the vector $X_1$ as $X_{1,a} = \mathrm{col}( X_1 , u )$, we infer that
\begin{equation}\label{eq: augmented truncated model}
	\dot{X}_{1,a} = A_{1,a} X_{1,a} + B_{1,a} v
\end{equation}
with
\begin{equation*}
	A_{1,a} = \begin{bmatrix} A_1 & B_{1,u} \\ 0 & 0  \end{bmatrix} , \quad
	B_{1,a} = \begin{bmatrix} B_{1,v} \\ 1 \end{bmatrix} .
\end{equation*}
Denoting $\nu_{i,k} = \alpha_{i,k} + \lambda_{i,k} \beta_{i,k}$, based on the discussion in the previous section regarding the controllability of the modes, the application of the Hautus test shows that the pair $(A_{1,a},B_{1,a})$ is controllable if and only if $\nu_{1,n} \neq 0$ for all $n \in \{\varphi_1(N_\Delta),\ldots,\varphi_1(N_{0})\}$. Noting that $\Delta_1(a,b) \cap \Theta(a,b) = \emptyset$, this gives the following result. 

\begin{lemma}\label{lem: cont finite dim model}
The pair $(A_{1,a},B_{1,a})$ is controllable if and only if 
\begin{equation}\label{eq: controllability assumption}
\Theta(a,b) \cap \{0,\ldots,N_{0}\} = \emptyset
\end{equation}
where $\Theta(a,b)$ is defined by \eqref{eq: def Theta(a,b)}.
\end{lemma}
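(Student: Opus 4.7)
The plan is to apply the Hautus eigenvector test to $(A_{1,a},B_{1,a})$ and translate the resulting conditions into the stated algebraic condition on $\Theta(a,b)$. Because $A_{1,a}$ is block-upper-triangular with the integrator row $(0,\ldots,0,-\mu,1)$ at the bottom of $[A_{1,a}-\mu I,\,B_{1,a}]$, a short computation shows that a left null vector $(w_1,w_u)$ of this matrix must satisfy $w_1^\top A_1 = \mu w_1^\top$, $w_u = -w_1^\top B_{1,v}$, and
\[ w_1^\top(B_{1,u} + \mu B_{1,v}) = 0. \]
Consequently $(A_{1,a},B_{1,a})$ is controllable if and only if $w_1^\top(B_{1,u}+\mu_0 B_{1,v}) \neq 0$ for every left eigenvector $w_1$ of every eigenvalue $\mu_0$ of $A_1$. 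In particular, the integrator eigenvalue $\mu=0$ of $A_{1,a}$ requires no separate treatment: when $0$ is not an eigenvalue of $A_1$ only $w_1 = 0$ is admissible (so $w_u = 0$ as well), while otherwise the condition is already captured in the loop over the spectrum of $A_1$.

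Next, I would evaluate the condition block-by-block using $A_1 = \mathrm{diag}(A_{1,1},A_{1,2},A_{1,3})$. A preliminary check based on the definitions of $\Delta_1(a,b)$ and $\Delta_2(a,b)$ shows that the three blocks have pairwise disjoint spectra, so each eigenvalue of $A_1$ arises from exactly one block. For each $2\times 2$ Jordan block of $A_{1,1}$, the unique left eigenvector (up to scaling) is $(0,1)^\top$ and the condition reduces to $\nu_{2,m} = -\mu_m \neq 0$, which is automatic; the same reduction yields the automatic condition $\nu_{2,m} \neq 0$ for each simple mode $\lambda_{2,m}$ of $A_{1,3}$. For a simple mode $\lambda_{1,n}$ of $A_{1,2}$, the condition reduces to $\nu_{1,n} \neq 0$, which by Lemma~\ref{lem: mu_i_k} and the spectral controllability discussion preceding the lemma is equivalent to $n \notin \Theta(a,b)$. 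Hence $(A_{1,a},B_{1,a})$ is controllable if and only if $\Theta(a,b)$ is disjoint from $\{\varphi_1(N_\Delta),\ldots,\varphi_1(N_0)\} = \{0,\ldots,N_0\}\setminus\Delta_1(a,b)$.

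Finally, to recover the cleaner statement of the lemma, I would establish the disjointness $\Delta_1(a,b) \cap \Theta(a,b) = \emptyset$ by a short parity argument: a common element $n$ would force, via \eqref{eq: def Delta(a,b)} and \eqref{eq: def Theta(a,b)}, an identity of the form $4m^2 = (2k+1)^2$ for some $m,k \in \mathbb{N}$, which is impossible since the right-hand side is odd. This disjointness upgrades the condition above to $\Theta(a,b) \cap \{0,\ldots,N_0\} = \emptyset$, yielding the lemma. The main subtlety I anticipate is the correct handling of the $2\times 2$ Jordan blocks of $A_{1,1}$ (where the geometric multiplicity is $1$ while the algebraic multiplicity is $2$, so that only the eigenvector, and not a generalized eigenvector, enters the Hautus test), together with the verification that the three blocks of $A_1$ have pairwise disjoint spectra so that the mode-by-mode analysis decouples cleanly.
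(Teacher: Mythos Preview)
Your proposal is correct and follows essentially the same route as the paper: apply the Hautus test to $(A_{1,a},B_{1,a})$, reduce block-by-block to the conditions $\nu_{1,n}\neq 0$ for $n\in\{\varphi_1(N_\Delta),\ldots,\varphi_1(N_0)\}$ (the other blocks being automatic since $\nu_{2,m}=-\mu_m\neq 0$), and then invoke $\Delta_1(a,b)\cap\Theta(a,b)=\emptyset$. The paper states this in one sentence and leaves the Hautus computation and the parity argument implicit, whereas you spell them out; the substance is the same.
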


Under this condition, one can design a state-feedback of the form 
\begin{equation}\label{eq: state-feedback}
v = - K X_{1,a}
\end{equation}
(for instance, by usual pole shifting or by Riccati theory)
to stabilize the augmented plant \eqref{eq: augmented truncated model}, hence ensuring the stabilization of the $0 \leq n \leq N_0$ and $0 \leq m \leq M_0$ first modes of the PDE cascade \eqref{eq: system}. The next result shows that this state-feedback strategy actually stabilizes the full PDE cascade \eqref{eq: system} in $L^2$- and $H^1$-norms.

\begin{theorem}\label{thm: state-feedback}
	Let $a,b,s \in\mathbb{R}$ with $s \neq 0$. Let $\delta > 0$ be arbitrary. Let $N_{0},M_{0} \geq 0$ be such that $\lambda_{1,N_{0}+1} < -\delta$ and $\lambda_{2,M_{0}+1} < -\delta$ with $N_0 \geq \max\Delta_1(a,b)$ and $M_0 \geq \max\Delta_2(a,b)$. Assuming that the controllability assumption \eqref{eq: controllability assumption} holds true, let $K\in\mathbb{R}^{1 \times (N_{0}+M_{0}+3)}$ be a matrix such that $A_{1,a} - B_{1,a} K$ is Hurwitz with spectral abscissa less than $-\delta$. Then there exists $C > 0$ such that, for all initial conditions $y_0,z_0 \in L^2(0,1)$ and $u(0)=u_0\in\mathbb{R}$, the solutions of the PDE system \eqref{eq: system} in closed-loop with the feedback \eqref{eq: state-feedback} 
satisfy
	\begin{equation}\label{eq: state-feedback - exp estimate}
		\Vert ( y(t,\cdot) , z(t,\cdot) ) \Vert_{Y}^2 + u(t)^2  
		\leq C e^{-2\delta t} \left( \Vert ( y_0 , z_0 ) \Vert_{Y}^2 + u_0^2 \right) \qquad\forall t\geq 0 
	\end{equation}
	with $Y = \mathcal{H}^0$ defined by \eqref{eq: space H0}. If the initial conditions are such that $y_0,z_0\in H^2(0,1)$ with $y_0(1)=z_0'(1)=0$, $y_0'(0)=sz_0(0)$ and $z_0'(0)=u(0)$, then \eqref{eq: state-feedback - exp estimate} holds with $Y = \mathcal{H}^1$ defined by \eqref{eq: space H1}.
\end{theorem}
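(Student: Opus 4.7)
The plan is to exploit the one-way coupling between a finite-dimensional low-mode block and an infinite-dimensional tail. I work with the auxiliary state $(\tilde{\mathcal{X}},u)=((y,\tilde z),u)$ obtained from the change of variable \eqref{eq: change of variable}, which satisfies \eqref{eq: abstract system homogeneous} together with $\dot u=v$. By Lemma~\ref{lem: Riesz basis}, $\tilde{\mathcal{X}}$ expands uniquely in the Riesz basis $\Phi$; the low-frequency coefficients are packaged in the vector $X_{1,a}$ of the statement, and the remaining coefficients $\{\tilde x_{1,n}\}_{n>N_0}\cup\{\tilde x_{2,m}\}_{m>M_0}$ form the tail. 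Because $N_0\geq\max\Delta_1(a,b)$ and $M_0\geq\max\Delta_2(a,b)$, every tail mode is algebraically simple, so its dynamics is given by \eqref{eq: dynamics tilde_x1n}--\eqref{eq: dynamics tilde_x2m}, with $\lambda_{1,n},\lambda_{2,m}<-\delta$.

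Second, I close the loop on the finite block. Since the tail does not feed back into $X_{1,a}$, the closed-loop equation for $X_{1,a}$ is the autonomous ODE $\dot X_{1,a}=(A_{1,a}-B_{1,a}K)X_{1,a}$. By Lemma~\ref{lem: cont finite dim model} and the choice of $K$, this matrix is Hurwitz with spectral abscissa $<-\delta$, hence there exist $P\succ 0$ and $\delta'>\delta$ such that $(A_{1,a}-B_{1,a}K)^{\top}P+P(A_{1,a}-B_{1,a}K)\leq -2\delta' P$. This yields $|X_{1,a}(t)|\leq C e^{-\delta' t}|X_{1,a}(0)|$ and, as $u$ and $v=-KX_{1,a}$ are affine functions of $X_{1,a}$, both $|u(t)|$ and $|v(t)|$ enjoy the same decay.

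Third, I analyze the tail in $\mathcal{H}^0$ via the Lyapunov function $V_2(t)=\sum_{n>N_0}\tilde x_{1,n}(t)^2+\sum_{m>M_0}\tilde x_{2,m}(t)^2$. Differentiating along \eqref{eq: dynamics tilde_x1n}--\eqref{eq: dynamics tilde_x2m}, using $\lambda_{1,n},\lambda_{2,m}<-\delta$ and Young's inequality gives $\dot V_2\leq -2\delta V_2+C(u^2+v^2)$, the constant $C$ being finite because $(\alpha_{i,k})_{i,k}$ and $(\beta_{i,k})_{i,k}$ are square-summable: they are the coefficients of $(0,\alpha),(0,\beta)\in\mathcal{H}^0$ in the dual Riesz basis $\Psi$. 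Either via the composite Lyapunov function $V=X_{1,a}^{\top}PX_{1,a}+\gamma V_2$ with $\gamma>0$ small, or directly by a Duhamel--Gr\"onwall argument using the already-established decay of $u,v$, one gets $V_2(t)+|X_{1,a}(t)|^2\leq C\,e^{-2\delta t}\,(V_2(0)+|X_{1,a}(0)|^2)$. The Riesz basis property (Lemma~\ref{lem: Riesz basis}) yields $V_2+|X_{1,a}|^2\asymp \|\tilde{\mathcal{X}}\|_{\mathcal{H}^0}^2+u^2$, and inverting the change of variable $z=\tilde z+\varphi u$ then produces the announced $\mathcal{H}^0$ estimate.

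The $\mathcal{H}^1$ bound follows the same pattern, but with the weighted tail functional $V_2^1(t)=\sum_{n>N_0}(n+1/2)^2\pi^2\,\tilde x_{1,n}(t)^2+\sum_{m>M_0}(1+m^2\pi^2)\,\tilde x_{2,m}(t)^2$, which by Lemma~\ref{lem: Riesz basis H1} is equivalent to the squared $\mathcal{H}^1$-norm of the tail. The key point is that the \emph{weighted} sequences $((n+1/2)\pi\,\alpha_{1,n}), ((n+1/2)\pi\,\beta_{1,n}), (\sqrt{1+m^2\pi^2}\,\alpha_{2,m}), (\sqrt{1+m^2\pi^2}\,\beta_{2,m})$ are still square-summable, because $(0,\alpha),(0,\beta)\in\mathcal{H}^1$ (the function $\varphi$ being polynomial) and by Lemma~\ref{lem: Riesz basis H1} the $\mathcal{H}^1$-norm is equivalent to precisely these weighted sums of Riesz coefficients. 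The regularity assumptions on $y_0,z_0$ translate exactly into $(y_0,\tilde z_0)\in D(\mathcal{A})\subset\mathcal{H}^1$, so $V_2^1(0)<\infty$ and the closed-loop solution remains in $\mathcal{H}^1$. The main delicate point is this verification of the weighted $\ell^2$ bounds; aside from it, the proof reduces to a clean diagonal computation because the non-orthogonality of $\Phi$ (stemming from generalized eigenvectors) is confined to the finitely many indices in $\Delta(a,b)$, all of which lie inside the finite-dimensional block $X_{1,a}$.
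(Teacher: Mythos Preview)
Your proposal is correct and follows essentially the same architecture as the paper's proof: a composite Lyapunov function $V = X_{1,a}^\top P X_{1,a} + c\,V_2$ (the paper's constant is $c$, yours is $\gamma$), with the tail $V_2$ being the unweighted or weighted sum of squared Riesz coefficients depending on whether one works in $\mathcal{H}^0$ or $\mathcal{H}^1$.

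The one genuine technical difference concerns the $\mathcal{H}^1$ tail estimate. You argue that the weighted sequences $((n+1/2)\pi\,\alpha_{1,n})$, $(\sqrt{1+m^2\pi^2}\,\alpha_{2,m})$, etc., are square-summable because $(0,\alpha),(0,\beta)\in\mathcal{H}^1$ and Lemma~\ref{lem: Riesz basis H1} identifies the $\mathcal{H}^1$-norm with these weighted $\ell^2$ sums; this lets you run the same Young-inequality argument as in $\mathcal{H}^0$, merely with a different constant. The paper instead does \emph{not} rely on weighted summability: it applies Young's inequality with a free parameter $\epsilon$ so that the cross term contributes an extra $(n+1/2)^2/\epsilon$ to the coefficient of $\tilde x_{1,n}^2$, and then absorbs this growth into the eigenvalue via $\Gamma_{1,n} = -(n+1/2)^2(\pi^2 - 1/\epsilon) + a + \delta$, which is still $\leq 0$ for large $n$ once $\epsilon > 1/\pi^2$; only the \emph{unweighted} sums $S_\alpha,S_\beta$ enter. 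Your route is more conceptual (it uses the Riesz basis in $\mathcal{H}^1$ a second time) and avoids the $\epsilon$-balancing, while the paper's route is more robust in settings where $(0,\alpha)$ or $(0,\beta)$ might fail to lie in the stronger space.
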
 

\begin{proof}
	Since $A_{1,a}-B_{1,a} K$ is Hurwitz with eigenvalues of real part less than $-\delta$, we define $P \succ 0$ as the solution of the Lyapunov equation $(A_{1,a}-B_{1,a} K)^\top P + P (A_{1,a}-B_{1,a} K) + 2 \delta P + I = 0$. In the case of the $\mathcal{H}^0$-norm, we define
	\begin{equation*}
		V(\tilde{\mathcal{X}},u) = X_{1,a}^\top P X_{1,a} + c \sum_{n \geq N_{0}+1} \tilde{x}_{1,n}^2 + c \sum_{m \geq M_{0}+1} \tilde{x}_{2,m}^2 
	\end{equation*}
	for some constant $c > 0$ to be chosen later. We are going to show that $V$ is a Lyapunov function, thus yielding \eqref{eq: state-feedback - exp estimate}.
	The connection with the $\mathcal{H}^0$-norm lies in the fact that $\Psi$ is a Riesz basis of $\mathcal{H}^0$ (see Lemma~\ref{lem: Riesz basis}). For the study in $\mathcal{H}^1$-norm we define
$$
		V(\tilde{\mathcal{X}},u) =  X_{1,a}^\top P X_{1,a} + c \sum_{n \geq N_{0}+1} (n+1/2)^2 \tilde{x}_{1,n}^2  + c \sum_{m \geq M_{0}+1} (1+m^2) \tilde{x}_{2,m}^2 .
$$
	for some $c > 0$ to be chosen later. The connection with the $\mathcal{H}^1$-norm is provided by Lemma~\ref{lem: Riesz basis H1}. Indeed, from \eqref{eq: series expansion of the state}, one has
	\begin{align*}
		\tilde{\mathcal{X}}(t,\cdot) 
		& = (y(t,\cdot),\tilde{z}(t,\cdot)) 
		 = \sum_{n \geq 0} \tilde{x}_{1,n}(t) \phi_{1,n} + \sum_{m \geq 0} \tilde{x}_{2,m}(t) \phi_{2,m} \\
		& = \sum_{n \geq 0} ( n + 1/2 ) \pi \tilde{x}_{1,n}(t) \frac{1}{( n + 1/2 )\pi} \phi_{1,n}  + \sum_{m \geq 0} \sqrt{1+m^2\pi^2} \tilde{x}_{2,m}(t) \frac{1}{\sqrt{1+m^2\pi^2}} \phi_{2,m} .
	\end{align*}
This series converges in $\mathcal{H}^0$-norm. However, in view of Lemma~\ref{lem: Riesz basis H1}, the convergence also holds in $\mathcal{H}^1$-norm for classical solutions since $\tilde{\mathcal{X}}(t,\cdot) \in D(\mathcal{A})$. Due again to Lemma~\ref{lem: Riesz basis H1}, we deduce that $\Vert \tilde{\mathcal{X}}(t,\cdot) \Vert_{\mathcal{H}^1}^2$ is equivalent to $\sum_{n \geq 0} (n+1/2)^2\tilde{x}_{1,n}(t)^2 + \sum_{m \geq 0} (1+m^2)\tilde{x}_{2,m}(t)^2$. This motivates the above definition for $V$.
\\[2mm]
	The stability assessments for the $\mathcal{H}^0$-norm and the $\mathcal{H}^1$-norm are similar. We thus focus on the second one. Computing the time derivative along the closed-loop system trajectories consisting of \eqref{eq: augmented truncated model} and \eqref{eq: state-feedback}, \eqref{eq: dynamics tilde_x1n} and \eqref{eq: dynamics tilde_x2m}, using Young's inequality we infer that
	\begin{align*}
	\dot{V}+2\delta V 
	& = - \Vert X_{1,a} \Vert^2 + 2 c \sum_{n \geq N_{0}+1} (n+1/2)^2 \tilde{x}_{1,n} ( \dot{\tilde{x}}_{1,n} + \delta \tilde{x}_{1,n} )  + 2 c \sum_{m \geq M_{0}+1} (1+m^2) \tilde{x}_{2,m} ( \dot{\tilde{x}}_{2,m} + \delta \tilde{x}_{2,m} ) \\
	& \leq - \left( 1 - \epsilon c ( S_{\alpha} + \Vert K \Vert^2 S_{\beta} )  \right) \Vert X_{1,a} \Vert^2  + 2 c \sum_{n \geq N_{0}+1} (n+1/2)^2 \Gamma_{1,n} \tilde{x}_{1,n}^2  + 2 c \sum_{m \geq M_{0}+1} (1+m^2) \Gamma_{2,m} \tilde{x}_{2,m}^2
	\end{align*}
	for all $\epsilon > 0$, with $S_{\alpha} = \sum_{n \geq N_{0}+1} \alpha_{1,n}^2 + \sum_{m \geq M_{0}+1} \alpha_{2,m}^2$, $S_{\beta} = \sum_{n \geq N_{0}+1} \beta_{1,n}^2 + \sum_{m \geq M_{0}+1} \beta_{2,m}^2$, 
	\begin{align*}
	\Gamma_{1,n} & = \lambda_{1,n} + \delta + \frac{(n+1/2)^2}{\epsilon} = - \left( \pi^2 - \frac{1}{\epsilon} \right) (n+1/2)^2 + a + \delta , \\
	\Gamma_{2,m} & = \lambda_{2,m} + \delta + \frac{1+m^2}{\epsilon} = - \left( \pi^2 - \frac{1}{\epsilon} \right) m^2 + b + \delta + \frac{1}{\epsilon} .
	\end{align*}		
	For any $\epsilon > 1/\pi^2$, we note that $\Gamma_{1,n} \leq \Gamma_{1,N_{0}+1}$ for all $n \geq N_{0} + 1$ and $\Gamma_{2,m} \leq \Gamma_{2,M_{0}+1}$ for all $m \geq M_{0} + 1$. Hence, 
	\begin{multline*}
	\dot{V}+2\delta V \leq  - \left( 1 - \epsilon c ( S_{\alpha} + \Vert K \Vert^2 S_{\beta} )  \right) \Vert X_{1,a} \Vert^2  + 2 c \Gamma_{1,N_{0}+1} \sum_{n \geq N_{0}+1} (n+1/2)^2 \tilde{x}_{1,n}^2  \\
	 + 2 c \Gamma_{2,M_{0}+1} \sum_{m \geq M_{0}+1} (1+m^2) \tilde{x}_{2,m}^2
	\end{multline*}
	for any $\epsilon > 1/\pi^2$. 	Since $\lambda_{1,N_{0}+1}<-\delta$ and $\lambda_{2,M_{0}+1}<-\delta$, we fix $\epsilon > 1/\pi^2$ sufficiently large so that $\Gamma_{1,N_{0}+1} \leq 0$ and $\Gamma_{2,M_{0}+1} \leq 0$. Then we fix $c > 0$ small enough to ensure that $1 - \epsilon c ( S_{\alpha} + \Vert K \Vert^2 S_{\beta} ) \geq 0$. This implies that $\dot{V}+2\delta V \leq 0$, yielding \eqref{eq: state-feedback - exp estimate}. \qed
\end{proof}

\begin{remark}
Based on Remark~\ref{rem: system 2}, the same result as stated by Theorem~\ref{thm: state-feedback} holds true for \eqref{eq: system 2} when removing the controllability assumption \eqref{eq: controllability assumption}.
\end{remark}

\section{Output-feedback}\label{sec: output-feedback}
In this section, we extend the result of the previous section to the case of output-feedback control with measurement done on the second component of the PDE cascade \eqref{eq: system}.

\subsection{Distributed output operator}
We consider the system output 
\begin{equation}\label{eq: output}
	y_m(t) = \int_0^1 c(x) y(t,x) \,\mathrm{d}x
\end{equation}
for some $c \in L^2(0,1)$. Defining $c_{1,n} = \int_0^1 c(x) \phi_{1,n}^1(x) \,\mathrm{d}x$ and $c_{2,m} = \int_0^1 c(x) \phi_{2,m}^1(x) \,\mathrm{d}x$, we infer from \eqref{eq: series expansion of the state} that
\begin{equation}
	y_m(t)  = \sum_{n \geq 0} c_{1,n} \tilde{x}_{1,n}(t) + \sum_{m \geq 0} c_{2,m} \tilde{x}_{2,m}(t) 
	 = C_1 X_1(t) + \sum_{n \geq N_{0} + 1} c_{1,n} \tilde{x}_{1,n}(t)  + \sum_{m \geq M_{0} + 1} c_{2,m} \tilde{x}_{2,m}(t) \label{eq: output series expansion}
\end{equation}
where
$
	C_1 = \begin{bmatrix} C_{1,1} & C_{1,2} & C_{1,3} \end{bmatrix}
$
with
\begin{align*}
C_{1,1} & = \begin{bmatrix} c_{1,\varphi_1(0)} & c_{2,\varphi_2(0)} & \ldots & c_{1,\varphi_1(N_\Delta-1)} & c_{2,\varphi_2(N_\Delta-1)} \end{bmatrix} , \\
C_{1,2} & = \begin{bmatrix} c_{1,\varphi_1(N_\Delta)} & \ldots & c_{1,\varphi_1(N_{0})} \end{bmatrix} , \\
C_{1,3} & = \begin{bmatrix} c_{2,\varphi_2(N_\Delta)} & \ldots & c_{2,\varphi_2(M_{0})} \end{bmatrix} .
\end{align*}
The Hautus test gives the following result. 
\begin{lemma}\label{lem: obsv}
$(A_1,C_1)$ is observable if and only if 
\begin{subequations}\label{eq: obsv condition}
\begin{align}
& c_{1,n} \neq 0 , \quad  \forall 0 \leq n \leq N_{0} , \\ 
& c_{2,m} \neq 0 , \quad \forall m\in\{0,1,\ldots,M_0\}\backslash\Delta_2(a,b) .
\end{align}
\end{subequations}
\end{lemma}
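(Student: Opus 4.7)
The plan is to apply the Hautus/PBH test for observability, exploiting the block-diagonal structure of $A_1 = \mathrm{diag}(A_{1,1}, A_{1,2}, A_{1,3})$. Recall that $(A_1, C_1)$ is observable if and only if, for every $\lambda \in \mathbb{C}$, the matrix $\begin{bmatrix} A_1 - \lambda I \\ C_1 \end{bmatrix}$ has full column rank (equivalently, this needs to be checked only for $\lambda$ in the spectrum of $A_1$).

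First I would verify that the three blocks share no common eigenvalue. The spectrum of $A_{1,1}$ is $\{\lambda_{1,\theta_1(i)} = \lambda_{2,\theta_2(i)} : 0 \le i \le N_\Delta - 1\}$; the spectrum of $A_{1,2}$ is $\{\lambda_{1,n} : n \in \{0,\ldots,N_0\} \setminus \Delta_1(a,b)\}$; the spectrum of $A_{1,3}$ is $\{\lambda_{2,m} : m \in \{0,\ldots,M_0\} \setminus \Delta_2(a,b)\}$. Within each block the eigenvalues are distinct by strict monotonicity of $n \mapsto \lambda_{1,n}$ and $m \mapsto \lambda_{2,m}$. Between $A_{1,1}$ and $A_{1,2}$ (resp. $A_{1,3}$) there is no overlap since the indices are partitioned by $\Delta_1(a,b)$ (resp. $\Delta_2(a,b)$). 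Between $A_{1,2}$ and $A_{1,3}$, an equality $\lambda_{1,n} = \lambda_{2,m}$ would force $(n,m) \in \Delta(a,b)$, contradicting $n \notin \Delta_1(a,b)$. So each eigenvalue of $A_1$ belongs to exactly one block.

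Next I apply PBH at each $\lambda \in \sigma(A_1)$. For $\lambda = \lambda_{1,n}$ with $n \in \{0,\ldots,N_0\} \setminus \Delta_1(a,b)$, the kernel of $A_1 - \lambda I$ is one-dimensional and aligned with the coordinate $\tilde{x}_{1,n}$; PBH holds at this mode iff $c_{1,n} \neq 0$. Similarly, for $\lambda = \lambda_{2,m}$ with $m \in \{0,\ldots,M_0\} \setminus \Delta_2(a,b)$, PBH holds iff $c_{2,m} \neq 0$. For $\lambda = \lambda_{1,n} = \lambda_{2,m}$ with $(n,m) \in \Delta(a,b)$, the only block containing $\lambda$ is the $2 \times 2$ Jordan-type block $M_n = \begin{bmatrix} \lambda & -\sqrt{2}s\mu_m \\ 0 & \lambda \end{bmatrix}$ inside $A_{1,1}$; since $s\mu_m \neq 0$, the kernel of $M_n - \lambda I$ is spanned by $(1,0)^\top$, which corresponds to the $\tilde{x}_{1,n}$-coordinate. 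Hence PBH here also reduces to $c_{1,n} \neq 0$.

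Gathering the conditions yields observability iff $c_{1,n} \neq 0$ for every $n \in \{0,\ldots,N_0\}$ (combining the two cases involving $\lambda_{1,n}$, with $n$ either in or out of $\Delta_1(a,b)$) and $c_{2,m} \neq 0$ for every $m \in \{0,\ldots,M_0\} \setminus \Delta_2(a,b)$, which is exactly \eqref{eq: obsv condition}. There is no real obstacle here; the only point requiring mild care is the cross-block disjointness of the spectra, which ensures that the PBH test for each eigenvalue decouples and touches only one block of $C_1$.
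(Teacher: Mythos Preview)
Your proof is correct and follows precisely the approach the paper indicates: the paper's own justification is the single sentence ``The Hautus test gives the following result,'' and your argument supplies exactly the details that this sentence leaves implicit (block-diagonal structure, disjointness of the spectra across the three blocks, and the Jordan-block analysis for the modes with $(n,m)\in\Delta(a,b)$).
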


We now define our output-feedback control strategy. Let $N \geq N_{0}$ and $M \geq M_{0}$ be integers to be chosen later. We introduce the following controller dynamics:
\begin{equation}\label{eq: output-feedback controller}
\begin{split}
	\dot{u} & = v \\
	\dot{\hat{X}}_1 & = A_1 \hat{X}_1 + B_{1,u} u + B_{1,v} v - L \left\{ C_1 \hat{X}_1 + \sum_{n=N_{0}+1}^{N} c_{1,n} \hat{x}_{1,n}  + \sum_{m=M_{0}+1}^{M} c_{2,m} \hat{x}_{2,m} - y_m(t) \right\} \\
	\dot{\hat{x}}_{1,n} & = \lambda_{1,n} \hat{x}_{1,n} + \alpha_{1,n} u + \beta_{1,n} v , \; N_{0} +1 \leq n \leq N \\
	\dot{\hat{x}}_{2,m} & = \lambda_{2,m} \hat{x}_{2,m} + \alpha_{2,m} u + \beta_{2,m} v , \; M_{0} +1 \leq m \leq M \\
	v & = - K_x \hat{X}_1 -k_u u
\end{split}
\end{equation}
where $k_u \in \mathbb{R}$, $K_x \in \mathbb{R}^{1 \times (N_{0}+M_{0}+2)}$ are feedback gains while $L \in \mathbb{R}^{N_{0}+M_{0}+2}$ is the observation matrix. This controller architecture is inspired by the seminal work \cite{sakawa1983feedback}, which has been extended in a number of ways (see \cite{grune2021finite,katz2020constructive,lhachemi2020finite,lhachemi2021nonlinear}).

\begin{theorem}\label{thm2}
	Let $a,b,s \in\mathbb{R}$ with $s \neq 0$ and $c \in L^2(0,1)$. Let $\delta > 0$ be arbitrary. Let $N_{0},M_{0} \geq 0$ be integers such that $\lambda_{1,N_{0}+1} < -\delta$, $\lambda_{2,M_{0}+1} < -\delta$, $N_0 \geq \max\Delta_1(a,b)$ and $M_0 \geq \max\Delta_2(a,b)$. Assuming that both controllability and observability assumptions \eqref{eq: controllability assumption} and \eqref{eq: obsv condition} hold, let $K = \begin{bmatrix} K_x & k_u \end{bmatrix}\mathbb{R}^{1 \times (N_{0}+M_{0}+3)}$ and $L\in\mathbb{R}^{N_{0}+M_{0}+2}$ be matrices such that $A_{1,a} - B_{1,a} K$ and $A_1 - L C_1$ are Hurwitz with eigenvalues of real part less than $-\delta$. Then, for any integers $N \geq N_{0}$ and $M \geq M_{0}$ chosen sufficiently large, there exists $C > 0$ such that, for all initial conditions $y_0,z_0 \in L^2(0,1)$, $u(0)=u_0\in\mathbb{R}$, $\hat{X}_1(0)\in\mathbb{R}^{N_{0}+M_{0}+2}$, and $\hat{x}_{1,n}(0),\hat{x}_{2,m}(0)\in\mathbb{R}$, the solutions of the system \eqref{eq: system} in closed-loop with the output-feedback \eqref{eq: output-feedback controller} 
satisfy
	\begin{align}
		& \Vert ( y(t,\cdot) , z(t,\cdot) ) \Vert_{Y}^2 + u(t)^2 + \Vert \hat{X}_1(t) \Vert^2  + \sum_{n=N_{0}+1}^{N} \hat{x}_{1,n}(t)^2 + \sum_{m=M_{0}+1}^{M} \hat{x}_{2,m}(t)^2 \nonumber \\
		& \qquad \leq C e^{-2\delta t} \Bigg( \Vert ( y_0 , z_0 ) \Vert_{Y}^2 + u_0^2 + \Vert \hat{X}_1(0) \Vert^2 + \sum_{n=N_{0}+1}^{N} \hat{x}_{1,n}(0)^2 + \sum_{m=M_{0}+1}^{M} \hat{x}_{2,m}(0)^2 \Bigg) \label{eq: exp stab estimate}
	\end{align}
for every $t \geq 0$, with $Y = \mathcal{H}^0$ defined by \eqref{eq: space H0}. If the initial conditions are such that $y_0,z_0\in H^2(0,1)$ with $y_0(1)=z_0'(1)=0$, $y_0'(0)=sz_0(0)$, and $z_0'(0)=u(0)$, then \eqref{eq: exp stab estimate} holds with $Y = \mathcal{H}^1$ defined by \eqref{eq: space H1}.
\end{theorem}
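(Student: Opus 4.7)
The plan is to extend the Lyapunov argument of Theorem~\ref{thm: state-feedback} by incorporating observer-error variables into the Lyapunov function, following the Sakawa-style architecture underlying \eqref{eq: output-feedback controller}. Introduce $e_1 = X_1 - \hat X_1$, $e_{1,n} = \tilde x_{1,n} - \hat x_{1,n}$ for $N_0+1 \leq n \leq N$, and $e_{2,m} = \tilde x_{2,m} - \hat x_{2,m}$ for $M_0+1 \leq m \leq M$. Writing $v = -K_x \hat X_1 - k_u u = -K X_{1,a} + K_x e_1$, a direct use of \eqref{eq: output series expansion} yields the closed-loop cascade
\begin{equation*}
\dot X_{1,a} = (A_{1,a}-B_{1,a}K) X_{1,a} + B_{1,a} K_x e_1 ,
\end{equation*}
\begin{equation*}
\dot e_1 = (A_1-L C_1) e_1 - L \left( \sum_{n=N_0+1}^{N} c_{1,n} e_{1,n} + \sum_{m=M_0+1}^{M} c_{2,m} e_{2,m} + \sum_{n>N} c_{1,n} \tilde x_{1,n} + \sum_{m>M} c_{2,m} \tilde x_{2,m} \right) ,
\end{equation*}
together with $\dot e_{1,n} = \lambda_{1,n} e_{1,n}$, $\dot e_{2,m} = \lambda_{2,m} e_{2,m}$, and the tail dynamics \eqref{eq: dynamics tilde_x1n}--\eqref{eq: dynamics tilde_x2m} for $n>N$, $m>M$. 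The conditions $N_0 \geq \max\Delta_1(a,b)$ and $M_0 \geq \max\Delta_2(a,b)$ ensure that all tail modes are algebraically simple, as in the state-feedback case.

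Let $P_1, P_2 \succ 0$ solve $(A_{1,a}-B_{1,a}K)^\top P_1 + P_1(A_{1,a}-B_{1,a}K) + 2\delta P_1 + I = 0$ and $(A_1 - LC_1)^\top P_2 + P_2(A_1 - LC_1) + 2\delta P_2 + I = 0$, and let $w_{1,n}=w_{2,m}=1$ in the $\mathcal{H}^0$ case, $w_{1,n}=(n+1/2)^2$, $w_{2,m}=1+m^2$ in the $\mathcal{H}^1$ case. I would consider the candidate
\begin{equation*}
V = X_{1,a}^\top P_1 X_{1,a} + \gamma\, e_1^\top P_2 e_1 + \gamma \!\!\sum_{n=N_0+1}^{N} w_{1,n} e_{1,n}^2 + \gamma \!\!\sum_{m=M_0+1}^{M} w_{2,m} e_{2,m}^2 + c \!\sum_{n>N} w_{1,n} \tilde x_{1,n}^2 + c \!\sum_{m>M} w_{2,m} \tilde x_{2,m}^2,
\end{equation*}
with $\gamma,c>0$ to be tuned. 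By Lemmas~\ref{lem: Riesz basis} and~\ref{lem: Riesz basis H1}, $V$ is equivalent to the sum of squared norms on each side of \eqref{eq: exp stab estimate}, in both functional settings.

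Computing $\dot V + 2\delta V$, the choice of $P_1, P_2$ yields the negative leading terms $-\Vert X_{1,a}\Vert^2 - \gamma\Vert e_1\Vert^2$; the middle-error blocks contribute strictly negative quantities since $\lambda_{i,k}+\delta<0$ there; and the tail sums are handled exactly as in the proof of Theorem~\ref{thm: state-feedback}, producing the coefficients $\Gamma_{1,n}, \Gamma_{2,m}$, strictly negative for $\epsilon>1/\pi^2$, plus residual contributions in $u^2, v^2$ weighted by the $L^2$-summable quantities $S_\alpha, S_\beta$. Using $v^2 \leq 2\Vert K\Vert^2 \Vert X_{1,a}\Vert^2 + 2\Vert K_x\Vert^2 \Vert e_1\Vert^2$ and $u^2 \leq \Vert X_{1,a}\Vert^2$, these residuals feed back into the leading quadratic coefficients with factors proportional to $c$. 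The remaining cross-terms are absorbed via Young's inequality: the $X_{1,a}$-to-$e_1$ coupling $2 X_{1,a}^\top P_1 B_{1,a} K_x e_1$; the $e_1$-to-middle-error couplings $-2\gamma e_1^\top P_2 L c_{i,k} e_{i,k}$, absorbed by the negative middle-error coefficients; and, crucially, the tail-to-$e_1$ perturbation, which via Cauchy--Schwarz satisfies
\begin{equation*}
-2\gamma e_1^\top P_2 L \!\left( \sum_{n>N} c_{1,n} \tilde x_{1,n} + \sum_{m>M} c_{2,m} \tilde x_{2,m} \right) \leq \tau \Vert e_1 \Vert^2 + \frac{\gamma^2 \Vert P_2 L\Vert^2}{\tau}\, \Sigma_c \cdot \Sigma_w ,
\end{equation*}
where $\Sigma_c = \sum_{n>N} c_{1,n}^2/w_{1,n} + \sum_{m>M} c_{2,m}^2/w_{2,m}$ and $\Sigma_w = \sum_{n>N} w_{1,n}\tilde x_{1,n}^2 + \sum_{m>M} w_{2,m}\tilde x_{2,m}^2$. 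The finiteness of $\Sigma_c$ follows from $c\in L^2(0,1)$ together with the asymptotics used in the proof of Lemma~\ref{lem: Riesz basis}.

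The main obstacle is the order of parameter selection, which must close the loop between the observer-error absorption and the tail dissipation. I would fix $\epsilon>1/\pi^2$ so that $\Gamma_{1,N_0+1}, \Gamma_{2,M_0+1}<0$, take $\tau=\gamma/2$, then choose $\gamma>0$ small enough so that the $X_{1,a}$-to-$e_1$ coupling and the middle-error cross-terms are absorbed into the negative leading and middle-error coefficients. Next, enlarge $N, M$ so that $\Sigma_c$ is arbitrarily small and $\vert\Gamma_{1,N+1}\vert, \vert\Gamma_{2,M+1}\vert$ are arbitrarily large. Finally, choose $c>0$ such that both the tail-induced $O(c)$ residuals in the $\Vert X_{1,a}\Vert^2, \Vert e_1\Vert^2$ coefficients remain dominated, and the coefficient of $\Sigma_w$, which is bounded above by $-2c \min(\vert\Gamma_{1,N+1}\vert,\vert\Gamma_{2,M+1}\vert) + \gamma^2\Vert P_2L\Vert^2 \Sigma_c/\tau$ (plus $O(c)$ tail residuals from the coupling with $u$ and $v$), is non-positive; the flexibility in $N, M$ guarantees that both constraints on $c$ are compatible. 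This yields $\dot V + 2\delta V \leq 0$, hence $V(t) \leq e^{-2\delta t} V(0)$, which via the norm equivalence gives \eqref{eq: exp stab estimate}. In the $\mathcal{H}^1$ case, the argument is first carried out for classical solutions (where $\tilde{\mathcal{X}}(t,\cdot) \in D(\mathcal{A})$ and the expansion in $\Phi^1$ converges in $\mathcal{H}^1$-norm by Lemma~\ref{lem: Riesz basis H1}) and then extended by density.
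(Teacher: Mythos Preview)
Your overall architecture (block Lyapunov with separate $P_1,P_2$) is a legitimate alternative to the paper's route, but two concrete gaps prevent the argument from closing as written.

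First, your Lyapunov functional is incomplete for the claimed norm equivalence. The estimate \eqref{eq: exp stab estimate} requires control of $\Vert(y,z)\Vert_Y^2$, which by the Riesz basis property is equivalent to $\sum_n w_{1,n}\tilde x_{1,n}^2+\sum_m w_{2,m}\tilde x_{2,m}^2$, and separately of $\sum_{N_0+1}^N\hat x_{1,n}^2+\sum_{M_0+1}^M\hat x_{2,m}^2$. Your $V$ contains neither $\tilde x_{1,n}$ nor $\hat x_{1,n}$ for the middle range $N_0+1\le n\le N$ (only their difference $e_{1,n}$), so $V$ is \emph{not} equivalent to the right-hand side of \eqref{eq: exp stab estimate}. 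You must add, e.g., $c'\sum_{N_0+1}^N w_{1,n}\hat x_{1,n}^2+c'\sum_{M_0+1}^M w_{2,m}\hat x_{2,m}^2$; these satisfy stable ODEs driven by $u,v$ and can be treated exactly like the tail.

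Second, and more structurally, your parameter selection for $\gamma$ is in the wrong direction. The cross-term $2X_{1,a}^\top P_1B_{1,a}K_x e_1$ carries no factor $\gamma$, so whichever way you split it with Young's inequality the $\Vert e_1\Vert^2$ coefficient can only be absorbed into $-\gamma\Vert e_1\Vert^2$ if $\gamma$ is \emph{large} (at least $\gamma>2\Vert P_1B_{1,a}K_x\Vert^2$). Taking $\gamma$ small makes the situation worse. With $\gamma$ large, the $e_1$-to-tail cross-term then scales like $\gamma$, and the middle-range $\hat x$ terms you need to add also introduce $u,v$-couplings; the resulting lower bound on $c$ competes against its upper bound coming from the $\Vert X_{1,a}\Vert^2$ budget. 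This \emph{can} be closed, but only because $\Sigma_c\to 0$ and $|\Gamma_{i,\cdot}|\to\infty$ as $N,M\to\infty$; you should reorder the selection as: fix $\gamma$ large (absorbing (a) and the middle-error couplings), then enlarge $N,M$, then pick $c$. For comparison, the paper avoids this entire balancing act by assembling the full finite-dimensional state $X=\mathrm{col}(\hat X_{1,a},E_1,\hat X_2,E_2)$ and using a \emph{single} Lyapunov matrix $P$ for the block-triangular matrix $F$, invoking a lemma that guarantees $\Vert P\Vert=\mathrm O(1)$ as $N,M\to\infty$; the only remaining couplings are then the two scalar residues $\zeta_1,\zeta_2$, handled by a Schur complement.
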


\begin{proof}
Define the observation errors $e_{1,n} = \tilde{x}_{1,n} - \hat{x}_{1,n}$ and $e_{2,m} = \tilde{x}_{2,m} - \hat{x}_{2,m}$, and the vectors
\begin{subequations}
\begin{align}
	\hat{X}_{1,a} & = \mathrm{col}( \hat{X}_1 , u ) , \nonumber \\ 
	E_{1,1} & = \begin{bmatrix} e_{1,\varphi_1(0)} & e_{2,\varphi_2(0)} &  \ldots & e_{1,\varphi_1(N_\Delta-1)} & e_{2,\varphi_2(N_\Delta-1)} \end{bmatrix}^\top , \nonumber \\
	E_{1,2} & = \begin{bmatrix} e_{1,\varphi_1(N_\Delta)} & \ldots & e_{1,\varphi_1(N_{0})} \end{bmatrix}^\top , \quad
	E_{1,3}  = \begin{bmatrix} e_{2,\varphi_2(N_\Delta)} & \ldots & e_{2,\varphi_2(M_{0})} \end{bmatrix}^\top , \nonumber \\
	E_1 & = \mathrm{col}( E_{1,1} , E_{1,2} , E_{1,3} ) ,\nonumber  \\
	\hat{X}_{2,1} & = \begin{bmatrix} \hat{x}_{1,N_{0}+1} & \ldots & \hat{x}_{1,N} \end{bmatrix}^\top , \quad
	\hat{X}_{2,2}  = \begin{bmatrix} \hat{x}_{2,M_{0}+1} & \ldots & \hat{x}_{2,M} \end{bmatrix}^\top , \quad
	\hat{X}_{2}  = \mathrm{col}( \hat{X}_{2,1} , \hat{X}_{2,2} ) ,	 \nonumber \\
	E_{2,1} & = \begin{bmatrix} (N_0+2)^{\kappa_1} e_{1,N_{0}+1} & \ldots & (N+1)^{\kappa_1} e_{1,N} \end{bmatrix}^\top , \label{eq: def E21}\\
	E_{2,2} & = \begin{bmatrix} (M_{0}+2)^{\kappa_2} e_{2,M_{0}+1} & \ldots & (M+1)^{\kappa_2} e_{2,M} \end{bmatrix}^\top , \label{eq: def E22}\\
	E_{2} & = \mathrm{col}( E_{2,1} , E_{2,2} ) , \quad
	X  = \mathrm{col}(\hat{X}_{1,a},E_1,\hat{X}_2,E_2) . \nonumber 
\end{align}
\end{subequations}
In this proof we set $\kappa_1 = \kappa_2 = 0$. Different values of $\kappa_1,\kappa_2 \geq 0$ will be chosen in the proof of Theorem~\ref{thm3} given later. Using \eqref{eq: dynamics tilde_x1n}, \eqref{eq: dynamics tilde_x2m}, \eqref{eq: dynamics X1}, \eqref{eq: augmented truncated model}, \eqref{eq: output series expansion} and \eqref{eq: output-feedback controller}, we infer that 
\begin{equation}\label{eq: output-feedback - finite dim model}
	\dot{X} = F X + \mathcal{L} \zeta_1 + \mathcal{L} \zeta_2
\end{equation}
with the measurement residues
\begin{align}\label{eq: residues of measurement}
\zeta_1 = \sum_{n \geq N + 1} c_{1,n} \tilde{x}_{1,n} , \quad
\zeta_2 = \sum_{m \geq M + 1} c_{2,m} \tilde{x}_{2,m},
\end{align}
where
\begin{equation*}
F = \begin{bmatrix}
A_{1,a} - B_{1,a} K & L_a C_1 & 0 & L_a C_2 \\
0 & A_1-LC_1 & 0 & -L C_2 \\
\left[ 0 \; B_{2,u} \right] - B_{2,v} K & 0 & A_2 & 0 \\
0 & 0 & 0 & A_2
\end{bmatrix} , \;
\mathcal{L} = \begin{bmatrix}
L_a \\ - L \\ 0 \\ 0
\end{bmatrix}
\end{equation*}
and 
\begin{align}
K & = \begin{bmatrix} K_x & k_u \end{bmatrix} , \quad
L_a = \mathrm{col}(L,0) , \nonumber \\
A_2 & = \mathrm{diag}(\lambda_{1,N_{0}+1},\ldots,\lambda_{1,N},\lambda_{2,M_{0}+1},\ldots,\lambda_{2,M}) , \nonumber \\
B_{2,u} & = \begin{bmatrix} \alpha_{1,N_{0}+1} & \ldots & \alpha_{1,N} & \alpha_{2,M_{0}+1} & \ldots & \alpha_{2,M} \end{bmatrix}^\top , \nonumber \\
B_{2,v} & = \begin{bmatrix} \beta_{1,N_{0}+1} & \ldots & \beta_{1,N} & \beta_{2,M_{0}+1} & \ldots & \beta_{2,M} \end{bmatrix}^\top , \nonumber \\
C_{2} & = \begin{bmatrix} \frac{c_{1,N_{0}+1}}{(N_0+2)^{\kappa_1}} & \ldots & \frac{c_{1,N}}{(N+1)^{\kappa_1}} & \frac{c_{2,M_{0}+1}}{(M_{0}+2)^{\kappa_2}} & \ldots & \frac{c_{2,M}}{(M+1)^{\kappa_2}} \end{bmatrix} . \label{eq: def C2}
\end{align}
We finally note that
\begin{equation*}
u = E \hat{X}_{1,a} = \tilde{E} X , \quad
v = - K \hat{X}_{1,a} = - \tilde{K} X
\end{equation*}
where $E = \begin{bmatrix} 0 & \ldots & 0 & 1 \end{bmatrix}$, $\tilde{E} = \begin{bmatrix} E & 0 & 0 & 0 \end{bmatrix}$ and $\tilde{K} = \begin{bmatrix} K & 0 & 0 & 0 \end{bmatrix}$.

In the case of the $\mathcal{H}^0$-norm, we define (see Lemma~\ref{lem: Riesz basis})
	\begin{equation*}
		V = X^\top P X + \sum_{n \geq N_{0}+1} \tilde{x}_{1,n}^2 + \sum_{m \geq M_{0}+1} \tilde{x}_{2,m}^2 
	\end{equation*}
	for some $P \succ 0$. For the study in $\mathcal{H}^1$-norm we define (see Lemma~\ref{lem: Riesz basis H1})
	\begin{equation*}
		V  = X^\top P X + \sum_{n \geq N+1} (n+1/2)^2 \tilde{x}_{1,n}^2 + \sum_{m \geq M+1} (1+m^2) \tilde{x}_{2,m}^2 
	\end{equation*}
	for some $P \succ 0$. We focus on the  second case. Computing the time derivative along the solutions of \eqref{eq: dynamics tilde_x1n}, \eqref{eq: dynamics tilde_x2m} and \eqref{eq: output-feedback - finite dim model}, using Young's inequality, we have
	\begin{align*}
	\dot{V}+2\delta V 
	&= \tilde{X}^\top \begin{bmatrix} F^\top P + P F + 2 \delta P & P\mathcal{L} & P\mathcal{L} \\  \mathcal{L}^\top P & 0 & 0 \\  \mathcal{L}^\top P & 0 & 0 \end{bmatrix}	 \tilde{X}  + 2 \sum_{n \geq N+1} (n+1/2)^2 ( \lambda_{1,n} + \delta ) \tilde{x}_{1,n}^2 \\
	& \phantom{=}\; + 2 \sum_{n \geq N+1} (n+1/2)^2 \tilde{x}_{1,n} ( \alpha_{1,n} u + \beta_{1,n} v )  + 2 \sum_{m \geq M+1} (1+m^2) (\lambda_{2,m}+\delta) \tilde{x}_{2,m}^2 \\
	& \phantom{=}\; + 2 \sum_{m \geq M+1} (1+m^2) \tilde{x}_{2,m} ( \alpha_{2,m} u + \beta_{2,m} v ) \\
	& \leq \tilde{X}^\top \begin{bmatrix} \Theta_{1,1} & P\mathcal{L} & P\mathcal{L} \\  \mathcal{L}^\top P & 0 & 0 \\  \mathcal{L}^\top P & 0 & 0 \end{bmatrix}	 \tilde{X} + 2 \sum_{n \geq N+1} (n+1/2)^2 \left( \lambda_{1,n} + \frac{(n+1/2)^2}{\epsilon_1} + \delta \right) \tilde{x}_{1,n}^2 \\
	& \phantom{\leq}\; + 2 \sum_{m \geq M+1} (1+m^2) \left( \lambda_{2,m} + \frac{1+m^2}{\epsilon_2} + \delta \right) \tilde{x}_{2,m}^2 
	\end{align*}
	for all $\epsilon_1,\epsilon_2 > 0$ with $\tilde{X} = \mathrm{col}(X,\zeta_1,\zeta_2)$, 
	$$
	\Theta_{1,1}  = F^\top P + P F + 2 \delta P + \epsilon_1 \left( S_{\alpha_1,N} \tilde{E}^\top \tilde{E} + S_{\beta_1,N} \tilde{K}^\top \tilde{K} \right)  + \epsilon_2 \left( S_{\alpha_2,M} \tilde{E}^\top \tilde{E} + S_{\beta_2,M} \tilde{K}^\top \tilde{K} \right)
	$$
	where $S_{\alpha_1,N} = \sum_{n \geq N +1} \alpha_{1,n}^2$, $S_{\alpha_2,M} = \sum_{m \geq M +1} \alpha_{2,m}^2$, $S_{\beta_1,N} = \sum_{n \geq N +1} \beta_{1,n}^2$, and $S_{\beta_2,M} = \sum_{m \geq M +1} \beta_{2,m}^2$. We infer from \eqref{eq: residues of measurement} and from the Cauchy-Schwarz inequality that 
	$$
		\zeta_1^2  \leq \underbrace{\sum_{n \geq N+1} c_{1,n}^2}_{= S_{\zeta_1,N} < \infty} \sum_{n \geq N+1} \tilde{x}_{1,n}^2 , \qquad
		\zeta_2^2  \leq \underbrace{\sum_{m \geq M+1} c_{2,m}^2}_{= S_{\zeta_2,M} < \infty} \sum_{m \geq M+1} \tilde{x}_{2,m}^2 .
	$$	
Hence
	\begin{equation}
	\dot{V}+2\delta V 
	 \leq \tilde{X}^\top \Theta_1 \tilde{X} + \sum_{n \geq N+1} (n+1/2)^2 \Gamma_{1,n}\tilde{x}_{1,n}^2 + \sum_{m \geq M+1} (1+m^2) \Gamma_{2,m} \tilde{x}_{2,m}^2 \label{eq: dotV}
	\end{equation}
	where
	\begin{subequations}
	\begin{align}
	\Theta_1 & = \begin{bmatrix} \Theta_{1,1} & P\mathcal{L} & P\mathcal{L} \\  \mathcal{L}^\top P & -\eta_1 & 0 \\  \mathcal{L}^\top P & 0 & -\eta_2 \end{bmatrix} , \label{eq: Theta_1} \\
	\Gamma_{1,n} & = 2 \left( \lambda_{1,n} + \frac{(n+1/2)^2}{\epsilon_1} + \delta \right) + \frac{\eta_1 S_{\zeta_1,N}}{(n+1/2)^2} \nonumber \\
	& = 2 \left( - (n+1/2)^2 \left( \pi^2 - \frac{1}{\epsilon_1} \right) + a + \delta \right) + \frac{\eta_1 S_{\zeta_1,N}}{(n+1/2)^2} \label{eq: Gamma_1_n} , \\
	\Gamma_{2,m} & = 2 \left( \lambda_{2,m} + \frac{1+m^2}{\epsilon_2} + \delta \right) + \frac{\eta_2 S_{\zeta_2,M}}{1+m^2} \nonumber \\
	& = 2 \left( - m^2 \left( \pi^2 - \frac{1}{\epsilon_2} \right) + b + \delta + \frac{1}{\epsilon_2} \right) + \frac{\eta_2 S_{\zeta_2,M}}{1+m^2} \label{eq: Gamma_2_m}
	\end{align}
	\end{subequations}
	for all $\epsilon_1,\epsilon_2,\eta_1,\eta_2 > 0$. Hence, for all $\epsilon_1,\epsilon_2 > 1/\pi^2$, we have $\Gamma_{1,n} \leq \Gamma_{1,N+1}$ for all $n \geq N+1$ and $\Gamma_{2,m} \leq \Gamma_{2,M+1}$ for all $m \geq M+1$. This gives
	\begin{equation*}
	\dot{V}+2\delta V 
	 \leq \tilde{X}^\top \Theta_1 \tilde{X} + \Gamma_{1,N+1} \sum_{n \geq N+1} (n+1/2)^2 \tilde{x}_{1,n}^2  + \Gamma_{2,M+1} \sum_{m \geq M+1} (1+m^2) \tilde{x}_{2,m}^2
	\end{equation*}
	for all $\epsilon_1,\epsilon_2 > 1/\pi^2$ and $\eta_1,\eta_2 > 0$. Therefore, $\dot{V}+2\delta V \leq 0$, yielding \eqref{eq: exp stab estimate}, provided that we can find $N \geq N_0 + 1$, $M \geq M_0 + 1$, $\epsilon_1,\epsilon_2 > 1/\pi^2$, $\eta_1,\eta_2 > 0$ and $P \succ 0$ such that
	\begin{equation*}
	\Theta_1 \preceq 0 , \quad
	 \Gamma_{1,N+1} \leq 0 , \quad
	 \Gamma_{2,M+1} \leq 0 .
	\end{equation*}
	Let us prove the feasibility of these constraints. Since $F$ is Hurwitz with eigenvalues of real part less than $-\delta$, and since $\Vert C_2 \Vert = \mathrm{O}(1)$, $\Vert B_{2,u} \Vert = \mathrm{O}(1)$ and $\Vert B_{2,v} \Vert = \mathrm{O}(1)$ as $N,M \rightarrow + \infty$, the application of \cite[Lemma in Appendix]{lhachemi2020finite} shows that the solution $P \succ 0$ to $F^\top P + P F + 2 \delta P = -I$ is such that $\Vert P \Vert = \mathrm{O}(1)$ as $(N,M) \rightarrow + \infty$. Then, fixing $\epsilon_1,\epsilon_2 > 1/\pi^2$ and 
	\begin{equation*}
\eta_1 = \left\{\begin{array}{cl}
\frac{1}{\sqrt{S_{\zeta_1,N}}} & \mathrm{if}\; S_{\zeta_1,N} \neq 0 \\
N & \mathrm{otherwise}
\end{array}\right.
\qquad
\eta_2 = \left\{\begin{array}{cl}
\frac{1}{\sqrt{S_{\zeta_2,M}}} & \mathrm{if}\; S_{\zeta_2,M} \neq 0 \\
M & \mathrm{otherwise}
\end{array}\right.
\end{equation*}
	we deduce that $\Gamma_{1,N+1} \rightarrow - \infty$ as $N \rightarrow +\infty$, $\Gamma_{2,M+1} \rightarrow - \infty$ as $M \rightarrow +\infty$, and $\Theta_1 \preceq 0$ for $N,M$ large enough. 
	The last claim follows from the Schur complement based on the facts that $\Vert \tilde{E} \Vert = 1$, $\Vert \tilde{K} \Vert = \Vert K \Vert$, and $\Vert \mathcal{L} \Vert = \sqrt{2} \Vert L \Vert$ are constants independent of $N,M$, $\Vert P \Vert = \mathrm{O}(1)$ as $(N,M) \rightarrow + \infty$, and $S_{\alpha_1,N},S_{\beta_1,N} \rightarrow 0$ as $N \rightarrow +\infty$ and $S_{\alpha_2,M},S_{\beta_2,M} \rightarrow 0$ as $M \rightarrow +\infty$. This completes the proof. \qed
\end{proof}

\subsection{Extension to pointwise measurements}
In this section, we extend the result of the previous section to the case of pointwise (possibly, Dirichlet or Neumann) measurement operators, namely, either
\begin{equation}\label{eq: Dirichlet measurement}
	y_m(t) = y(t,\xi_p) 
\end{equation}
or
\begin{equation}\label{eq: Neumann measurement}
	y_m(t) = \partial_x y(t,\xi_p) 
\end{equation}
for some $\xi_p\in[0,1]$. Hence, defining $c_{1,n} = \phi_{1,n}^1(\xi_p)$ and $c_{2,m} = \phi_{2,m}^1(\xi_p)$ in the case of the measurement \eqref{eq: Dirichlet measurement}, and $c_{1,n} = (\phi_{1,n}^1)'(\xi_p)$ and $c_{2,m} = (\phi_{2,m}^1)'(\xi_p)$ in the case of the measurement \eqref{eq: Neumann measurement}, we infer from \eqref{eq: series expansion of the state} that the series expansion \eqref{eq: output series expansion} holds.
\\[2mm]
In view of Lemma~\ref{lem: obsv}, the pair $(A_1,C_1)$ is observable if and only if:
\begin{itemize}
\item For the measurement \eqref{eq: Dirichlet measurement}: 
\begin{subequations}\label{eq: obsv cond for Dirichlet}
\begin{align}
& \cos((n+1/2)\pi\xi_p) \neq 0 , \quad \forall 0 \leq n \leq N_0 \label{eq: obsv cond for Dirichlet-1} \\
& \left\{\begin{array}{l}
\xi_p -1 \neq 0 ,\,\mathrm{if}\, b-a = m^2 \pi^2 \\
\sinh(r_m(1-\xi_p)) \neq 0 ,\, \mathrm{if}\, b-a \neq m^2 \pi^2
\end{array}\right. \nonumber , \quad\forall m\in\{0,\ldots,M_0\}\backslash\Delta_2(a,b) \label{eq: obsv cond for Dirichlet-2}
\end{align}
\end{subequations}
\item For the measurement \eqref{eq: Neumann measurement}:
\begin{subequations}\label{eq: obsv cond for Neumann}
\begin{align}
\sin((n+1/2)\pi\xi_p) \neq 0 , \quad & \forall 0 \leq n \leq N_0 \\
\cosh(r_m(1-\xi_p)) \neq 0 , \quad & \forall m\in\{0,\ldots,M_0\}\backslash\Delta_2(a,b) \quad \mathrm{s.t.}\; b-a \neq m^2 \pi^2
\end{align}
\end{subequations}
\end{itemize}
where in both cases $r_m$ is one of the two square roots of $\lambda_{2,m}-a=b-a-m^2\pi^2$.

\begin{theorem}\label{thm3}
	Considering either the measurement \eqref{eq: Dirichlet measurement} or the measurement \eqref{eq: Neumann measurement} for some $\xi_p \in [0,1]$, the same statement as Theorem~\ref{thm2} holds true for the Hilbert space $Y = \mathcal{H}^1$ defined by \eqref{eq: space H1}.
\end{theorem}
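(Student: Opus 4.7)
The plan is to follow the architecture of the proof of Theorem~\ref{thm2} in the $\mathcal{H}^1$-case, with two modifications addressing the fact that, for pointwise measurements, the coefficients $c_{1,n},c_{2,m}$ do not decay to zero (and for Neumann measurements, $c_{1,n} = \mathrm{O}(n)$), so that the sums $S_{\zeta_j,\cdot}$ used in the proof of Theorem~\ref{thm2} are infinite. Note that pointwise evaluation is well-defined on $H^1(0,1)$ via the embedding $H^1(0,1) \hookrightarrow C([0,1])$, which is consistent with the restriction to $Y = \mathcal{H}^1$.

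The first modification is to select the weights $\kappa_1,\kappa_2$ appearing in \eqref{eq: def E21}--\eqref{eq: def C2} strictly positive, large enough to ensure that $\|C_2\|$ remains $\mathrm{O}(1)$ as $(N,M) \to +\infty$. Using the explicit formulas of Lemma~\ref{lem: eigenelements A}, a short computation shows that for the Dirichlet measurement \eqref{eq: Dirichlet measurement} one has $c_{1,n} = \mathrm{O}(1)$ and $c_{2,m} = \mathrm{O}(1/m)$ (so that e.g.\ $(\kappa_1,\kappa_2) = (1,0)$ works), while for the Neumann measurement \eqref{eq: Neumann measurement} one has $c_{1,n} = \mathrm{O}(n)$ and $c_{2,m} = \mathrm{O}(1)$ (so that e.g.\ $(\kappa_1,\kappa_2) = (2,1)$ is enough). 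Together with the $\ell^2$-summability of $\alpha_{i,k},\beta_{i,k}$ (inherited from $\alpha,\beta\in L^2$), this gives $\|C_2\|,\|B_{2,u}\|,\|B_{2,v}\| = \mathrm{O}(1)$, hence $\|P\| = \mathrm{O}(1)$ via the same auxiliary lemma from~\cite{lhachemi2020finite} as in Theorem~\ref{thm2}. Observability of $(A_1,C_1)$, needed to design~$L$, is ensured by Lemma~\ref{lem: obsv} together with conditions~\eqref{eq: obsv cond for Dirichlet}--\eqref{eq: obsv cond for Neumann}.

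The second modification is to replace the bound $\zeta_j^2 \leq S_{\zeta_j,\cdot}\sum \tilde{x}_{j,\cdot}^2$ by a weighted Cauchy--Schwarz inequality
$$ \zeta_1^2 \leq \Bigg( \sum_{n\geq N+1} \frac{c_{1,n}^2}{(n+1/2)^{2p_1}} \Bigg) \sum_{n\geq N+1} (n+1/2)^{2p_1} \tilde{x}_{1,n}^2 , $$
with $p_1$ taken so that the first sum is a finite tail: $p_1 = 1$ suffices for Dirichlet and $p_1 = 2$ is needed for Neumann; an analogous choice $p_2 \in \{0,1\}$ is made for $\zeta_2$. Inserted into the Young-inequality bound of $2 X^\top P \mathcal{L}\zeta_j$, this adds a correction of order $\eta_1 (n+1/2)^{2p_1-2}$ to the coefficient of $(n+1/2)^2 \tilde{x}_{1,n}^2$ in the counterpart of~\eqref{eq: Gamma_1_n}, and analogously for $\Gamma_{2,m}$.

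The main obstacle is then the dissipation-versus-correction balance in the Neumann case. The leading negative contribution $2(n+1/2)^2 \lambda_{1,n} = -2\pi^2 (n+1/2)^4 + \mathrm{O}((n+1/2)^2)$ in $\dot V + 2\delta V$ provides dissipation of order $(n+1/2)^4$, which now must simultaneously absorb both the cross-term contribution of order $(n+1/2)^4/\epsilon_1$ and the correction of order $(n+1/2)^{2p_1} = (n+1/2)^4$ coming from $\zeta_1$. Choosing $\epsilon_1 > 1/\pi^2$ and $\eta_1$ small enough keeps the total coefficient of $(n+1/2)^4$ strictly negative, so that $\Gamma_{1,n} \to -\infty$; the same argument for $\Gamma_{2,m}$ and the Schur complement applied to the modified $\Theta_1$ combined with $\|P\| = \mathrm{O}(1)$ give $\Theta_1 \preceq 0$ for $N,M$ large enough, and the conclusion $\dot V + 2\delta V \leq 0$ follows as in Theorem~\ref{thm2}. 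This argument critically relies on the $(n+1/2)^4$-dissipation available in $\mathcal{H}^1$, which has no analogue in $\mathcal{H}^0$, explaining the restriction to $Y = \mathcal{H}^1$.
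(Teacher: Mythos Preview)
Your approach mirrors the paper's: both proofs modify the Theorem~\ref{thm2} argument by (i) weighting the observation-error vectors $E_{2,1},E_{2,2}$ (via $\kappa_1,\kappa_2$) to keep $\Vert C_2\Vert=\mathrm{O}(1)$, and (ii) using a weighted Cauchy--Schwarz estimate for the residues $\zeta_1,\zeta_2$. The asymptotics you quote for $c_{1,n},c_{2,m}$ and the Dirichlet choice $(\kappa_1,\kappa_2)=(1,0)$ match the paper exactly.

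The difference is your exponent in the Neumann case. The paper takes $\kappa_1=7/4$ and splits $\zeta_1^2\leq\big(\sum c_{1,n}^2/(n+1/2)^{7/2}\big)\sum (n+1/2)^{7/2}\tilde{x}_{1,n}^2$, so the correction added to $\Gamma_{1,n}$ is of order $(n+1/2)^{3/2}$, \emph{strictly subdominant} to the $(n+1/2)^2$ dissipation. This decouples the sign of $\Gamma_{1,N+1}$ from the size of $\eta_1$. You instead use the critical exponent $2p_1=4$, which makes the $\zeta_1$-correction compete at the same order $(n+1/2)^2$ as the dissipation in $\Gamma_{1,n}$; negativity then requires $\eta_1 S_{\zeta_1,N}<2(\pi^2-1/\epsilon_1)$.

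This is where your phrasing ``$\eta_1$ small enough'' is problematic: the Schur complement for $\Theta_1\preceq 0$ forces $\eta_1^{-1}+\eta_2^{-1}$ to be bounded by a constant depending on $\Vert P\mathcal{L}\Vert$, so $\eta_1$ cannot be taken small independently of the other constraints. The argument is salvageable---since $S_{\zeta_1,N}\to 0$, one may fix $\eta_1$ (or let $\eta_1=S_{\zeta_1,N}^{-1/2}\to\infty$ as in the paper) and then take $N$ large---but you should state the order of choices correctly. The paper's subcritical exponent $7/2$ buys exactly this: it removes the competition at leading order, so $\Gamma_{1,N+1}\to-\infty$ automatically and $\eta_1$ can be chosen freely for the Schur complement.
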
 

\begin{proof}
The proof follows the one of Theorem~\ref{thm2}, with two key differences. The first concerns the selection of the constants $\kappa_1,\kappa_2$ involved in the definitions of the observation error vectors $E_{2,1},E_{2,2}$ given by \eqref{eq: def E21}-\eqref{eq: def E22} and of the matrix $C_2$ defined by \eqref{eq: def C2}. The objective is to ensure that $\Vert C_2 \Vert = \mathrm{O}(1)$ as $N,M \rightarrow + \infty$. This is crucial in the proof to ensure that the solution $P \succ 0$ to the Lyapunov equation $F^\top P + P F + 2 \delta P = -I$ satisfies $\Vert P \Vert = \mathrm{O}(1)$ as $N,M \rightarrow +\infty$ (see \cite[Lemma in Appendix]{lhachemi2020finite}). The second difference is about the estimation of the measurement residues $\zeta_1,\zeta_2$ defined by \eqref{eq: residues of measurement}. 
\\[2mm]
In the case of the measurement \eqref{eq: Dirichlet measurement}, since $c_{1,n} = \phi_{1,n}^1(\xi_p) = \mathrm{O}(1)$ as $n \rightarrow +\infty$ and $c_{2,m} = \phi_{2,m}^1(\xi_p) = \mathrm{O}(1/m)$ as $m \rightarrow +\infty$, we set $\kappa_1 = 1$ and $\kappa_2 =0$. This ensures in view of \eqref{eq: def C2} that $\Vert C_2 \Vert = \mathrm{O}(1)$ as $N,M \rightarrow + \infty$. Moreover, we have
\begin{equation*}
	\zeta_1^2  \leq \underbrace{\sum_{n \geq N+1} \frac{c_{1,n}^2}{(n+1/2)^2}}_{= S_{\zeta_1,N} < \infty} \sum_{n \geq N+1} (n+1/2)^2 \tilde{x}_{1,n}^2 , \qquad
	\zeta_2^2  \leq \underbrace{\sum_{m \geq M+1} c_{2,m}^2}_{= S_{\zeta_2,M} < \infty} \sum_{m \geq M+1} \tilde{x}_{2,m}^2 .
\end{equation*}	
Hence \eqref{eq: dotV} holds with
\begin{equation*}
	\Gamma_{1,n}  = 2 \left( \lambda_{1,n} + \frac{(n+1/2)^2}{\epsilon_1} + \delta \right) + \eta_1 S_{\zeta_1,N} 
	 = 2 \left( - (n+1/2)^2 \left( \pi^2 - \frac{1}{\epsilon_1} \right) + a + \delta \right) + \eta_1 S_{\zeta_1,N} 
\end{equation*}
and $\Theta_1$ and $\Gamma_{2,m}$ defined by \eqref{eq: Theta_1} and \eqref{eq: Gamma_2_m} respectively.
\\[2mm]
In the case of the measurement \eqref{eq: Neumann measurement}, we have $c_{1,n} = (\phi_{1,n}^1)'(\xi_p) = \mathrm{O}(n)$ as $n \rightarrow +\infty$ and $c_{2,m} = (\phi_{2,m}^1)'(\xi_p) = \mathrm{O}(1)$ as $m \rightarrow +\infty$. Hence, we set $\kappa_1 = 7/4$ and $\kappa_2 = 1$, which ensures, using \eqref{eq: def C2}, that $\Vert C_2 \Vert = \mathrm{O}(1)$ as $N,M \rightarrow + \infty$. We also have 
\begin{equation*}
	\zeta_1^2  \leq \underbrace{\sum_{n \geq N+1} \frac{c_{1,n}^2}{(n+1/2)^{7/2}}}_{= S_{\zeta_1,N} < \infty} \sum_{n \geq N+1} (n+1/2)^{7/2} \tilde{x}_{1,n}^2 , \qquad
	\zeta_2^2  \leq \underbrace{\sum_{m \geq M+1} \frac{c_{2,m}^2}{1+m^2}}_{= S_{\zeta_2,M} < \infty} \sum_{m \geq M+1} (1+m^2) \tilde{x}_{2,m}^2 .
\end{equation*}	
Hence \eqref{eq: dotV} holds with
\begin{align*}
	\Gamma_{1,n} & = 2 \left( \lambda_{1,n} + \frac{(n+1/2)^2}{\epsilon_1} + \delta \right) + (n+1/2)^{3/2} \eta_1 S_{\zeta_1,N} \\
	& = 2 \left( - (n+1/2)^2 \left( \pi^2 - \frac{1}{\epsilon_1} \right) + a + \delta \right) + (n+1/2)^{3/2} \eta_1 S_{\zeta_1,N}  \\
	\Gamma_{2,m} & = 2 \left( \lambda_{2,m} + \frac{1+m^2}{\epsilon_2} + \delta \right) + \eta_2 S_{\zeta_2,M} 
	 = 2 \left( - m^2 \left( \pi^2 - \frac{1}{\epsilon_2} \right) + b + \delta + \frac{1}{\epsilon_2} \right) + \eta_2 S_{\zeta_2,M}
\end{align*}
with $\Theta_1$ defined by \eqref{eq: Theta_1}.	
\\[2mm]
In both cases, the remainder of the proof now follows the arguments of the proof of Theorem~\ref{thm2}. \qed
\end{proof}

\begin{remark}
In the case of the measurement \eqref{eq: Dirichlet measurement}, the condition \eqref{eq: obsv cond for Dirichlet-1} is the usual one characterizing the observability of the mode $\lambda_{1,n}$ of the heat equation \eqref{eq: system - y} with boundary conditions $\partial_x y(t,0) = y(t,1)$. The condition \eqref{eq: obsv cond for Dirichlet-2} captures the possibility to observe the mode $\lambda_{2,m}$, associated with the first PDE component \eqref{eq: system - z} of the cascade, through a measurement done on the second component \eqref{eq: system - y} of the cascade. Similar remarks can be done to the distributed measurement \eqref{eq: output} and to the pointwise measurement \eqref{eq: Neumann measurement}.
\end{remark}

\begin{remark}
The observability conditions \eqref{eq: obsv condition} for the distributed measurement \eqref{eq: output}, the conditions \eqref{eq: obsv cond for Dirichlet} for the pointwise measurement \eqref{eq: Dirichlet measurement}, and the conditions \eqref{eq: obsv cond for Neumann} for the pointwise measurement \eqref{eq: Neumann measurement}, do not depend on the location of the input. Hence, the results discussed in this section apply to \eqref{eq: system 2}. In particular, since its modes are always controllable (see Remark~\ref{rem: system 2}), both Theorems~\ref{thm2} and~\ref{thm3} apply to the PDE cascade \eqref{eq: system 2} when removing the controllability assumption \eqref{eq: controllability assumption}.
\end{remark}

\section{Dual problems}\label{sec: Dual problems}
Owing to the definition of the adjoint operator \eqref{eq: adjoint operator}, the approaches developed in this paper also apply to the two following heat-heat cascades: 
\begin{subequations}\label{eq: system 3}
	\begin{align}
		& \partial_t y = \partial_{xx} y + a y , &&\\
		& \partial_t z = \partial_{xx} z + b z , &&\\
		& \partial_x z(t,0) = s y(t,0) , && y(t,1) = \partial_x z(t,1) = 0 , \\
		& \partial_x y(t,0) = u(t) , \\
		& y(0,x) = y_0(x) , && z(0,x) = z_0(x),
	\end{align}
\end{subequations}
and
\begin{subequations}\label{eq: system 4}
	\begin{align}
		& \partial_t y = \partial_{xx} y + a y , && \\
		& \partial_t z = \partial_{xx} z + b z , && \\
		& \partial_x z(t,0) = s y(t,0) , && \partial_x y(t,0) = \partial_x z(t,1) = 0 , \\
		& y(t,1) = u(t) , \\
		& y(0,x) = y_0(x) , && z(0,x) = z_0(x) .
	\end{align}
\end{subequations}
We summarize in this section the key elements for the study of these two PDE cascades that differ from the previously studied ones. We focus our presentation on the system \eqref{eq: system 3} and discuss the differences for the system \eqref{eq: system 4} in some remarks.

\subsection{Spectral analysis}
In the two PDE cascades \eqref{eq: system 3} and \eqref{eq: system 4}, the underlying unbounded operator is $\mathcal{A}^*$ defined by \eqref{eq: adjoint operator}, whose spectral properties are described in Section~\ref{sec: spectral analysis}. In particular, its set of generalized eigenvectors $\Psi$, defined in Lemma~\ref{eq: dual Riesz basis}, is a Riesz basis of the Hilbert space $\mathcal{H}^0$ defined by \eqref{eq: space H0}, of dual Riesz basis $\Phi$ defined by Lemma~\ref{lem: eigenelements A}. In order to study the solutions also in $H^1$-norm, we give the following lemma, whose proof follows the same arguments as the one of Lemma~\ref{lem: Riesz basis H1}.

\begin{lemma}
	The set $\Psi^1 = \{ \frac{1}{(n+1/2)\pi}\psi_{1,n} \,\mid\, n\geq 0 \} \cup \{ \frac{1}{\sqrt{1+m^2\pi^2}}\psi_{2,m} \,\mid\, m \geq 0\}$ is a Riesz basis of the Hilbert space $\mathcal{H}^1$ defined by \eqref{eq: space H1}.
\end{lemma}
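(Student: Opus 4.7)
The plan is to mirror the proof of Lemma~\ref{lem: Riesz basis H1}. Because $\psi_{2,m} = (0,\psi_{2,m}^2)$, the ``second'' family of vectors in $\Psi^1$ already lies on the $H^1(0,1)$-component of $\mathcal{H}^1$ and matches the Hilbert basis of $H^1(0,1)$ used there. By contrast, $\psi_{1,n} = (\psi_{1,n}^1,\psi_{1,n}^2)$ has a non-trivial second component, so the strategy is to compare $\Psi^1$ with the orthogonal family obtained by discarding the $\psi_{1,n}^2$ components.

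More precisely, I would introduce
\begin{equation*}
\tilde{\Psi}^1 = \bigl\{ \tfrac{1}{(n+1/2)\pi}(\psi_{1,n}^1,0) \,\mid\, n \geq 0 \bigr\}
\cup \bigl\{ \tfrac{1}{\sqrt{1+m^2\pi^2}} \psi_{2,m} \,\mid\, m \geq 0 \bigr\},
\end{equation*}
which is a Hilbert basis of $\mathcal{H}^1$, exactly as the family $\tilde{\Phi}^1$ in the proof of Lemma~\ref{lem: Riesz basis H1}. Then, invoking Bari's theorem, it suffices to check that $\Psi^1$ is $\omega$-linearly independent and that
\begin{equation*}
\sum_{n \geq 0} \frac{1}{(n+1/2)^2\pi^2} \, \Vert \psi_{1,n}^2 \Vert_{H^1(0,1)}^2 < \infty .
\end{equation*}
The $\omega$-linear independence follows from the fact that $\Psi$ is a set of generalized eigenvectors of $\mathcal{A}^*$ (Lemma~\ref{eq: dual Riesz basis}), paralleling the argument used for $\Phi$.

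The core of the proof is the summability estimate above. Since $\Delta_1(a,b)$ is finite, only finitely many $n$ correspond to cases \eqref{eq: psi_1_n^2 - 2}--\eqref{eq: psi_1_n^2 - 3} (for which $\Vert \psi_{1,n}^2\Vert_{H^1}$ is bounded by a constant independent of $n$), so they do not affect convergence. For all $n \notin \Delta_1(a,b)$ we use the explicit form \eqref{eq: psi_1_n^2 - 1}, i.e., $\psi_{1,n}^2(x) = -\frac{\sqrt{2}s}{r_n^*\sinh(r_n^*)}\cosh(r_n^*(1-x))$, together with $(\psi_{1,n}^2)'(x) = \frac{\sqrt{2}s}{\sinh(r_n^*)}\sinh(r_n^*(1-x))$. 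For $n$ large, $r_n^*$ can be chosen purely imaginary with $\mathrm{Im}(r_n^*) = \sqrt{(n+1/2)^2\pi^2 - (a-b)} = (n+1/2)\pi + \mathrm{O}(1/n)$, so $\vert\sinh(r_n^*)\vert = \vert\sin(\mathrm{Im}(r_n^*))\vert$ stays bounded below by a positive constant (since $\sin((n+1/2)\pi) = (-1)^n$), yielding $\Vert \psi_{1,n}^2 \Vert_{L^\infty} = \mathrm{O}(1/n)$ and $\Vert (\psi_{1,n}^2)'\Vert_{L^\infty} = \mathrm{O}(1)$. Therefore $\Vert \psi_{1,n}^2\Vert_{H^1}^2 = \mathrm{O}(1)$, and the sum above converges by comparison with $\sum 1/n^2$.

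The only subtle point, and the step I would treat most carefully, is checking that $\vert \sinh(r_n^*)\vert$ is bounded away from zero uniformly in large $n$; once this is established, the rest is an easy mimicry of the proof of Lemma~\ref{lem: Riesz basis H1}, and Bari's theorem closes the argument. \qed
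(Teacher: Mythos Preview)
Your proposal is correct and takes essentially the same approach as the paper, which simply states that the proof follows the same arguments as Lemma~\ref{lem: Riesz basis H1}. You have faithfully carried out that mirroring, including the identification of the reference Hilbert basis $\tilde{\Psi}^1$, the $\omega$-linear independence, and the key asymptotic estimate $\Vert(\psi_{1,n}^2)'\Vert_{L^\infty}=\mathrm{O}(1)$ via the uniform lower bound on $\vert\sinh(r_n^*)\vert$.
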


\subsection{Spectral reduction}

\subsubsection{Homogeneous representation}
The state is $\mathcal{X}(t) = (y(t,\cdot),z(t,\cdot))$.
We set $\varphi(x) = x-x^2$, so that $\varphi(0) = \varphi(1) = 0$ and $\varphi'(0) = 1$, and we make the change of variable 
\begin{equation}\label{eq: change of variable bis}
	\tilde{y}(t,x) = y(t,x) - \varphi(x) u(t) .
\end{equation}
In view of the dynamics \eqref{eq: system 3}, we have
	\begin{align*}
		& \partial_t \tilde{y} = \partial_{xx} \tilde{y} + a \tilde{y} + \alpha u + \beta \dot{u} ,\\
		& \partial_t z = \partial_{xx} z + b z, \\
		& \partial_x z(t,0) = s \tilde{y}(t,0) ,\ \ \tilde{y}(t,1) = \partial_x \tilde{y}(t,0) = \partial_x z(t,1) = 0 ,
	\end{align*}
with $\alpha(x) = \varphi''(x) + a \varphi(x)$ and $\beta(x) = - \varphi(x)$. Considering the state $\tilde{\mathcal{X}}(t) = (\tilde{y}(t,\cdot),z(t,\cdot))$ and $v = \dot{u}$, we have
\begin{equation}\label{eq: abstract system homogeneous bis}
	\frac{\mathrm{d}\tilde{\mathcal{X}}}{dt} = \mathcal{A}^* \tilde{\mathcal{X}} + (\alpha,0) u + (\beta,0) v .
\end{equation}
Note that $\tilde{\mathcal{X}} = \mathcal{X} + (\beta,0) u$.

\subsubsection{Dynamics of the modes}
While the study of the two PDE cascades \eqref{eq: system} and \eqref{eq: system 2} was based on the expansion of the solutions in the Riesz basis $\Phi$, the study of the two dual PDE cascades \eqref{eq: system 3} and \eqref{eq: system 4} requires the expansion of the solutions in the dual Riesz basis $\Psi$. We thus define the coefficients $\tilde{x}_{1,n} = \langle \tilde{\mathcal{X}} , \phi_{1,n} \rangle$, $\tilde{x}_{2,m} = \langle \tilde{\mathcal{X}} , \phi_{2,m} \rangle$, $x_{1,n} = \langle \mathcal{X} , \phi_{1,n} \rangle$, $\quad x_{2,m} = \langle \mathcal{X} , \phi_{2,m} \rangle$, $\alpha_{1,n} = \langle (\alpha,0) , \phi_{1,n} \rangle$, $\alpha_{2,m} = \langle (\alpha,0) , \phi_{2,m} \rangle$, $\beta_{1,n} = \langle (\beta,0) , \phi_{1,n} \rangle$ and $\beta_{2,m} = \langle (\beta,0) , \phi_{2,m} \rangle$. Then:
\begin{equation}
\tilde{\mathcal{X}}(t)  = (\tilde{y}(t,\cdot),z(t,\cdot)) 
 = \sum_{n \geq 0} \tilde{x}_{1,n}(t) \psi_{1,n} + \sum_{m \geq 0} \tilde{x}_{2,m}(t) \psi_{2,m} . \label{eq: series expansion of the state bis}
\end{equation}
\begin{itemize}
\item For $n \geq 0$, using \eqref{eq: A phi_1_n}, the projection of \eqref{eq: abstract system homogeneous bis} onto $\psi_{1,n}$ gives \eqref{eq: dynamics tilde_x1n} and \eqref{eq: dynamics x1n} with $\nu_{1,n} = \alpha_{1,n} + \lambda_{1,n} \beta_{1,n}$.
\item For $m \notin \Delta_2(a,b)$, using \eqref{eq: A phi_2_m}, the projection of \eqref{eq: abstract system homogeneous bis} onto $\psi_{2,m}$ gives \eqref{eq: dynamics tilde_x2m} and \eqref{eq: dynamics x2m} with $\nu_{2,m} = \alpha_{2,m} + \lambda_{2,m} \beta_{2,m}$.
\item For $m\in\Delta_2(a,b)$. Let $n \geq 0$ be such that $(n,m)\in\Delta(a,b)$, meaning that $\lambda = \lambda_{1,n} = \lambda_{2,m}$. In that case, using \eqref{eq: phi generalized eigenvector}, the projection of \eqref{eq: abstract system homogeneous bis} onto $\psi_{1,n}$ and $\psi_{2,m}$ gives
\begin{equation}\label{eq: dynamics tilde_x1n tilde_x2m mult 2 bis}
	\begin{bmatrix} \dot{\tilde{x}}_{1,n} \\ \dot{\tilde{x}}_{2,m} \end{bmatrix}
	= \underbrace{\begin{bmatrix} \lambda & 0 \\ -\sqrt{2} s \mu_m & \lambda \end{bmatrix}}_{= M_{n}} \begin{bmatrix} \tilde{x}_{1,n} \\ \tilde{x}_{2,m} \end{bmatrix} + \begin{bmatrix} \alpha_{1,n} \\ \alpha_{2,m} \end{bmatrix} u + \begin{bmatrix} \beta_{1,n} \\ \beta_{2,m} \end{bmatrix} v
\end{equation}
yielding
\begin{equation}\label{eq: dynamics x1n x2m mult 2 bis}
	\begin{bmatrix} \dot{x}_{1,n} \\ \dot{x}_{2,m} \end{bmatrix}
	= M_{n} \begin{bmatrix} x_{1,n} \\ x_{2,m} \end{bmatrix} + \begin{bmatrix} \nu_{1,n} \\ \nu_{2,m} \end{bmatrix} u 
\end{equation}
where $\nu_{1,n} = \alpha_{1,n} + \lambda_{1,n} \beta_{1,n}$ and $\nu_{2,m} = \alpha_{2,m} + \lambda_{2,m} \beta_{2,m} - \sqrt{2}s\mu_m \beta_{1,n}$.
\end{itemize}

\subsubsection{Controllability of the modes}

The characterization of the controllabity of each mode of the PDE cascade \eqref{eq: system 3} requires the evaluation of the terms $\nu_{1,n}$ and $\nu_{2,m}$. Proceeding as in the proof of Lemma~\ref{lem: mu_i_k}, we obtain the following result.

\begin{lemma}
$\nu_{i,k} = - \phi_{i,k}^1(0)$ for all $k \geq 0$ and $i\in\{1,2\}$. 
\end{lemma}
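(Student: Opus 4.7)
The plan is to mirror the argument used in the proof of Lemma~\ref{lem: mu_i_k}, exploiting the fact that the dual setting is perfectly symmetric: the operator $\mathcal{A}^*$ here plays the role that $\mathcal{A}$ played there, the basis $\Psi$ replaces $\Phi$, and the auxiliary function $\varphi(x)=x-x^2$ is tailored to the new boundary conditions ($\varphi(0)=\varphi(1)=0$, $\varphi'(0)=1$, $\varphi'(1)=-1$), in the same way that $\varphi(x)=x-x^2/2$ was tailored to the primal ones.

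Concretely, I would start from the identity
\begin{equation*}
\nu_{i,k} = \langle(\alpha,0),\phi_{i,k}\rangle + \lambda_{i,k}\langle(\beta,0),\phi_{i,k}\rangle
- \delta_{i,2}\,\mathbf{1}_{\Delta_2(a,b)}(k)\,\sqrt{2}s\mu_k\,\beta_{1,n(k)},
\end{equation*}
where the last correction term is present only in the generalized case $i=2$, $k\in\Delta_2(a,b)$ with $n(k)$ the partner index from $\Delta(a,b)$. Substituting $\alpha=\varphi''+a\varphi$ and $\beta=-\varphi$, the first two terms reduce to $\langle\varphi'',\phi_{i,k}^1\rangle + \langle\varphi, a\phi_{i,k}^1 - \lambda_{i,k}\phi_{i,k}^1\rangle$. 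I would then perform a single integration by parts on $\langle\varphi'',\phi_{i,k}^1\rangle$ and evaluate the boundary contribution using $\varphi(0)=\varphi(1)=0$, $\varphi'(0)=1$ together with the domain conditions $\phi_{i,k}^1(1)=0$ and $(\phi_{i,k}^1)'(0)=s\phi_{i,k}^2(0)$ coming from $D(\mathcal{A})$. The key point is that the factor $\varphi(0)=0$ annihilates the only boundary term that would involve $(\phi_{i,k}^1)'(0)$, leaving exactly $-\phi_{i,k}^1(0)$, plus an interior remainder $\langle\varphi,(\phi_{i,k}^1)''+a\phi_{i,k}^1\rangle = \langle(\varphi,0),\mathcal{A}\phi_{i,k}\rangle$.

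The final step is to eliminate this remainder using the eigen-relations established in Lemma~\ref{lem: eigenelements A}. In the regular cases ($i=1$ for every $n$, and $i=2$ for $m\notin\Delta_2(a,b)$) one simply has $\mathcal{A}\phi_{i,k}=\lambda_{i,k}\phi_{i,k}$, so the remainder cancels exactly with the $-\lambda_{i,k}\langle\varphi,\phi_{i,k}^1\rangle$ term and one obtains $\nu_{i,k}=-\phi_{i,k}^1(0)$. In the generalized case $i=2$, $m\in\Delta_2(a,b)$, one uses \eqref{eq: phi generalized eigenvector - 1}, namely $\mathcal{A}\phi_{2,m}=\lambda\phi_{2,m}-\sqrt{2}s\mu_m\phi_{1,n}$; the extra contribution $-\sqrt{2}s\mu_m\langle\varphi,\phi_{1,n}^1\rangle$ that this produces is precisely compensated by the correction term $-\sqrt{2}s\mu_m\beta_{1,n} = \sqrt{2}s\mu_m\langle\varphi,\phi_{1,n}^1\rangle$ already present in the definition of $\nu_{2,m}$, so the same formula $\nu_{2,m}=-\phi_{2,m}^1(0)$ survives.

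I do not anticipate any serious obstacle: the argument is essentially bookkeeping of integration by parts, and the boundary conditions on $\varphi$ have been chosen exactly so that the unwanted trace $(\phi_{i,k}^1)'(0)$ disappears. The only point requiring care is the sign tracking and the matching of the correction term in the generalized case against the off-diagonal entry of $M_n$ in \eqref{eq: dynamics tilde_x1n tilde_x2m mult 2 bis}, but this is identical in structure to what is done in the proof of Lemma~\ref{lem: mu_i_k}, with the roles of the two components of the cascade interchanged.
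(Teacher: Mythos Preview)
Your proposal is correct and is exactly the approach the paper has in mind: the paper's own proof consists solely of the sentence ``Proceeding as in the proof of Lemma~\ref{lem: mu_i_k}, we obtain the following result,'' and what you have written is precisely that transposition to the dual setting, with the roles of $\mathcal{A}$ and $\mathcal{A}^*$, of $\Phi$ and $\Psi$, and of the first and second components of the cascade interchanged. The only cosmetic remark is that the boundary evaluation you describe actually uses integration by parts twice (as in the proof of Lemma~\ref{lem: mu_i_k}), not once; the result you state for the boundary contribution is nonetheless correct.
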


We can now characterize the controllability of each mode of the PDE cascade \eqref{eq: system}:
\begin{itemize}
\item For $n \notin \Delta_1(a,b)$, the mode $\lambda_{1,n}$ has a one-dimensional dynamics given by \eqref{eq: dynamics x1n}, which is controllable because $\nu_{1,n} = - \phi_{1,n}^1(0) = - \sqrt{2} \neq 0$. 
\item For $m \notin \Delta_2(a,b)$, the mode $\lambda_{2,m}$ has a one-dimensional dynamics given by \eqref{eq: dynamics x2m}. It is controllable if and only if $\nu_{2,m} = - \phi_{2,m}^1(0) \neq 0$. If $b-a = m^2\pi^2$ then $\nu_{2,m} = s\mu_m \neq 0$, hence the mode is controllable. If $b-a \neq m^2\pi^2$, then the mode is controllable if and only if $\nu_{2,m} = - \frac{s\mu_m}{r_m}\tanh(r_m) \neq 0$, that is $\sinh(r_m) \neq 0$. Recalling that $r_m$ is one of the two square roots of $\lambda_{2,m}-a=b-a-m^2\pi^2 \neq 0$, the latter condition holds if and only if $m \notin \Theta'(a,b)$ with
\begin{equation}\label{eq: def Theta(a,b) bis}
		\Theta'(a,b) = \{  m\in\mathbb{N} \,\mid\, \exists k \geq 1  \;\mathrm{s.t.}  
		b-a = ( m^2 - k^2 ) \pi^2 \}. 
\end{equation}
\item When $(n,m)\in\Delta(a,b)$, we have $\lambda = \lambda_{1,n} = \lambda_{2,m}$. The two-dimensional dynamics \eqref{eq: dynamics x1n x2m mult 2 bis} is controllable because $s\mu_m \neq 0$ and $\nu_{1,n} = - \psi_{1,n}^1(0) = - \sqrt{2} \neq 0$.
\end{itemize}

\begin{remark}\label{rem: cont sys 4}
In the case of the PDE cascade \eqref{eq: system 4}, we apply the same approach with the change of variable  \eqref{eq: change of variable bis} done with the function $\varphi(x) = x^2$ so that $\varphi(0)=\varphi'(0)=0$ and $\varphi(1)=1$. The only difference in that $\nu_{i,k} = - (\phi_{i,k}^1)'(1)$. This implies that $\nu_{i,k} \neq 0$, meaning that all modes of the PDE cascade \eqref{eq: system 4} are always controllable. We reach here the same conclusion as for the two PDE cascades \eqref{eq: system} and \eqref{eq: system 2}. Indeed, while the collocated configuration (for the input and PDE interconnection) of the PDE cascade \eqref{eq: system 3} may lead to the loss of controllability for some modes and some specific reaction coefficients $a,b\in\mathbb{R}$, in the uncollocated setting of \eqref{eq: system 4} all modes are always controllable. 
\end{remark}

\subsection{State-feedback}

We proceed as in Section~\ref{sec: state-feedback} to design a state-feedback for the PDE cascade \eqref{eq: system 3}. Using the same definitions as the ones introduced in that section, the augmented model is controllable if and only if 
\begin{equation}\label{eq: controllability assumption bis}
\Theta'(a,b) \cap \{0,\ldots,N_{0}\} = \emptyset .
\end{equation} 
Therefore, the statement of Theorem~\ref{thm: state-feedback} also applies to the PDE cascade \eqref{eq: system 3} when replacing the controllability assumption \eqref{eq: controllability assumption} by \eqref{eq: controllability assumption bis}. The same approach applies to the PDE cascade \eqref{eq: system 4} when removing the controllability assumption \eqref{eq: controllability assumption bis}.

\subsection{Output-feedback}
In the case of the two PDE cascades \eqref{eq: system 3} and \eqref{eq: system 4}, since the control input applies to the $y$-equation, we consider a measurement performed on the $z$-equation. In the case of the distributed system output
\begin{equation}\label{eq: output bis}
	y_m(t) = \int_0^1 c(x) z(t,x) \,\mathrm{d}x
\end{equation}
for some $c \in L^2(0,1)$, we define $c_{1,n} = \int_0^1 c(x) \psi_{1,n}^2(x) \,\mathrm{d}x$ and $c_{2,m} = \int_0^1 c(x) \psi_{2,m}^2(x) \,\mathrm{d}x$. In the case of the pointwise measurement
\begin{equation}\label{eq: Dirichlet measurement bis}
	y_m(t) = z(t,\xi_p) 
\end{equation}
or 
\begin{equation}\label{eq: Neumann measurement bis}
	y_m(t) = \partial_x z(t,\xi_p) 
\end{equation}
for some $\xi_p\in[0,1]$, we define in the first case $c_{1,n} = \psi_{1,n}^2(\xi_p)$ and $c_{2,m} = \psi_{2,m}^2(\xi_p)$, and in the second case, $c_{1,n} = (\psi_{1,n}^2)'(\xi_p)$ and $c_{2,m} = (\psi_{2,m}^2)'(\xi_p)$. For the three measurement settings, we infer from \eqref{eq: series expansion of the state bis} that the series expansion \eqref{eq: output series expansion} holds. Adopting the notations introduced therein, the output-feedback control strategy designed in Section~\ref{sec: output-feedback} applies, provided that the pair $(A_1,C_1)$ is observable. The Hautus test shows that this observability condition holds if and only if 
\begin{subequations}\label{eq: obsv condition bis}
\begin{align}
& c_{1,n} \neq 0 , \quad  &&\forall n\in\{0,1,\ldots,N_0\}\backslash\Delta_1(a,b) , \\ 
& c_{2,m} \neq 0 , \quad &&\forall 0 \leq m \leq M_{0} .
\end{align}
\end{subequations}
This shows that the statements of Theorems~\ref{thm2} and~\ref{thm3} apply to the PDE cascade \eqref{eq: system 3} when replacing the controllability assumption \eqref{eq: controllability assumption} by \eqref{eq: controllability assumption bis} as well as the observability assumption \eqref{eq: obsv condition} by \eqref{eq: obsv condition bis}. The same approach applies to the PDE cascade \eqref{eq: system 4} when removing from the theorem statement the controllability assumption \eqref{eq: controllability assumption bis}.

\begin{remark}
For the pointwise measurement \eqref{eq: Neumann measurement bis}, we note that $\psi_{2,0}^2 = 1$, hence $c_{2,0} = 0$. Therefore, the mode $\lambda_{2,0} = b$ is never observable. Hence, in our approach, this measurement cannot be used to stabilize the $z$-part of the PDE cascades \eqref{eq: system 3} and \eqref{eq: system 4}. Our approach can however be applied as soon as the $z$-part of the cascade is open-loop stable, that is, when $\lambda_{2,0}=b < 0$. In this case, the modes $\lambda_{1,n} \geq 0$ associated with the $y$-part of the cascade, that fail to be exponentially stable, can be stabilized using the pointwise measurement \eqref{eq: Neumann measurement bis} provided that $c_{1,n} \neq 0$.
\end{remark}

\section{Conclusion}\label{sec: conclusion}
In this article we have studied the problem of state or output feedback stabilization for various heat-heat PDE cascades. Thanks to a detailed spectral study, we have established the Riesz basis property for the generalized eigenvectors of the underlying unbounded operators. This has allowed us to fully characterize the controllability properties of the four PDE cascades, and to derive explicit state-feedback and output-feedback control strategies. Other kinds of PDE cascades, possibly in higher dimension, will be studied in future works.

\appendix
\section{Appendix: Exact controllability properties for the PDE system \eqref{eq: system}}\label{appendix}
In this section, we show how to use the spectral properties derived in Section~\ref{sec: spectral analysis} to establish the exact null controllability property of \eqref{eq: system} in an appropriate Hilbert space $V_0 \subset \mathcal{H}^0$ of dual $V_0'$ with respect to the pivot space $\mathcal{H}^0$. This is achieved similarly to what has been done in~\cite{lhachemi2025controllability} for a wave-heat cascade by leveraging the Riesz spectral properties of the system. More precisely, we establish a finite-time observability inequality for the dual system $\dot{\mathcal{X}}(t) = \mathcal{A}^* \mathcal{X}(t)$, where $\mathcal{X} = (\mathcal{X}^1,\mathcal{X}^2)$, with observation $\mathcal{B}_0^*\mathcal{X}(t) = \mathcal{X}^2(t,0)$, i.e., for any $T > 0$ there exists $C_T^0 > 0$ such that 
$$
\int_0^T \vert  \mathcal{X}^2(t,0) \vert^2 \,\mathrm{d}t \geq C_T^0 \Vert \mathcal{X}(T,\cdot) \Vert_{V_0'}^2
$$
for all solutions of $\dot{\mathcal{X}}(t) = \mathcal{A}^* \mathcal{X}(t)$. 
Integrating, we have
\begin{multline*}
\mathcal{X}^2(t,0)
= \sum_{n\in\mathbb{N}\backslash\Delta_1(a,b)} e^{\lambda_{1,n} t} \langle \mathcal{X}(0) , \phi_{1,n} \rangle \psi_{1,n}^2(0) 
+ \sum_{m\in\mathbb{N}\backslash\Delta_2(a,b)} e^{\lambda_{2,m} t} \langle \mathcal{X}(0) , \phi_{2,m} \rangle \psi_{2,m}^2(0) \\
+ \sum_{(n,m)\in\Delta(a,b)} \begin{bmatrix} \psi_{1,n}^2(0) & \psi_{2,m}^2(0) \end{bmatrix} e^{M_n^\top t} \begin{bmatrix} \langle \mathcal{X}(0) , \phi_{1,n} \rangle \\ \langle \mathcal{X}(0) , \phi_{2,m} \rangle \end{bmatrix}
\end{multline*}
where we recall that $\mathrm{Card}(\Delta(a,b)) < \infty$. Therefore, using the matrix $M_n$ defined in \eqref{eq: dynamics tilde_x1n tilde_x2m mult 2} with $s \neq 0$ and recalling that $\psi_{2,m}^2(0) = \mu_m \neq 0$, a necessary condition for \eqref{eq: system} to be exactly controllable is, based on \eqref{eq: psi_1_n^2 - 1}, that $\psi_{1,n}^2(0) = - \frac{\sqrt{2}s}{r_n^*} \coth(r_n^*) \neq 0$ for all integers $n \in \mathbb{N}\backslash\Delta_1(a,b)$. Since $\Delta_1(a,b) \cap \Theta(a,b) = \emptyset$, this is equivalent to $\Theta(a,b) = \emptyset$. Conversely, assume that $\Theta(a,b) = \emptyset$. Then we infer that\footnote{$\Theta(a,b) = \emptyset$ implies $a \neq b$, see Remark~\ref{rmk: discussion Theta(a,b)}.} $\psi_{1,n}^2(0) \sim \frac{\sqrt{2}s(a-b)}{2n^2\pi^2}$  as $n \rightarrow +\infty$ and $\psi_{2,m}(0) \sim \sqrt{2}$ as $m \rightarrow +\infty$. Due to the possible occurrence of a finite number of blocs $M_n$ that are similar to Jordan blocks, we cannot directly apply the M{\"u}ntz-Sz{\'a}sz theorem (see~\cite{avdoninivanov}). But, using~\cite[Thm~V.4.16, p.94]{boyer2022controllability}, the moment method (see~\cite{trelat2024control}) yields the claimed observability property, and thus by duality, the exact null controllability property of \eqref{eq: system} in the Hilbert space 
\begin{equation*}
V_0 = \Big\{ \sum_{n\in\mathbb{N}} a_n \phi_{1,n} + \sum_{m\in\mathbb{N}} b_m \phi_{2,m} \,\mid\, a_n,b_n\in\mathbb{R} , \ \ 
\sum_{n\in\mathbb{N}} \Big( 1 + \frac{n^4}{s^2(a-b)^2} \Big) \vert a_n \vert^2 + \sum_{m\in\mathbb{N}} \vert b_m \vert^2  < +\infty \Big\} .
\end{equation*}
The exact controllability property at time $T > 0$ is established similarly in the Hilbert space
\begin{align*}
V = \Big\{ & \sum_{n\in\mathbb{N}} a_n \phi_{1,n} + \sum_{m\in\mathbb{N}} b_m \phi_{2,m} \,\mid\, a_n,b_n\in\mathbb{R} , \\
& \qquad \sum_{n\in\mathbb{N}} \Big( 1 + \frac{n^4}{s^2(a-b)^2} \Big) e^{2T(n+1/2)^2\pi^2} \vert a_n \vert^2 + \sum_{m\in\mathbb{N}} \vert b_m \vert^2 e^{2Tm^2\pi^2}  < +\infty \Big\} .
\end{align*}
In both cases, we observe the ``collapse'' of the controllability spaces $V_0$ and $V$ when we either uncouple the two equations ($s=0$) or when $a=b$, consistently with Lemmas~\ref{lem: eigenelements A} and~\ref{lem: cont finite dim model}. 
 \\[2mm]
Using the same approach, we can also study the final state observability of the system \eqref{eq: system} with measurement \eqref{eq: output}. A necessary condition is that $c_{1,n} \neq 0$ and $c_{2,m} \neq 0$ for all $n \in \mathbb{N}$ and $m \in\mathbb{N}\backslash\Delta_2(a,b)$. This condition appears to be sufficient in the Hilbert space 
\begin{equation*}
W = \Big\{ \sum_{n\in\mathbb{N}} a_n \phi_{1,n} + \sum_{m\in\mathbb{N}} b_m \phi_{2,m} \,\mid\, a_n,b_n\in\mathbb{R} ,\ \ 
\sum_{n\in\mathbb{N}} c_{1,n}^2 \vert a_n \vert^2 + \sum_{m\in\mathbb{N}} c_{2,m}^2 \vert b_m \vert^2  < +\infty \Big\} .
\end{equation*}

\paragraph{Acknowlegment.}
The third author acknowledges the support of ANR-20-CE40-0009 (TRECOS).

\bibliographystyle{abbrv}        
\bibliography{autosam}           

\end{document}